\documentclass[a4paper] {article}
[12pt]

\usepackage{setspace}

\usepackage{listings}
 \lstset{
  numbers=left,
  stepnumber=5,
  firstnumber=1,
  numberfirstline=true,
  basicstyle=\ttfamily
}

\makeatletter
\renewcommand*\l@section{\@dottedtocline{1}{1.5em}{2.3em}}
\makeatother

\usepackage{amsfonts}
\usepackage{amssymb}
\usepackage[T1]{fontenc}

\usepackage{tikz}
\usetikzlibrary{calc}

\usepackage{CJK}
\usepackage{amsmath}
 \usepackage{algorithmicx}
 \usepackage[ruled]{algorithm}
 \usepackage{algpseudocode}

\usepackage{amsfonts}
\usepackage{amssymb}
\usepackage{amsthm}
\usepackage{amssymb}
\usepackage{enumerate}
\usepackage[calc]{picture}
\usepackage[all,cmtip]{xy}

\usepackage[mathscr]{eucal}
\usepackage{eqlist}

\usepackage{color}
\usepackage{abstract}
\usepackage[T1]{fontenc}

\setlength{\abovecaptionskip}{0pt}
\setlength{\belowcaptionskip}{0pt}

\usepackage[top=2.5 cm, bottom=2.5 cm, left=3.5 cm, right=3.5 cm]{geometry}

\theoremstyle{plain}
\newtheorem{theorem}{Theorem}
\newtheorem{proposition}[theorem]{Proposition}
\newtheorem{lemma}[theorem]{Lemma}

\newtheorem{example}[theorem]{Example}
\newtheorem{corollary}[theorem]{Corollary}

\theoremstyle{definition}
\newtheorem{definition}{Definition}

\usepackage{etoolbox}
\newtheoremstyle{myrem}
 {3pt}
 {3pt}
 {\normalsize}
 { }
 {\itshape}
 {:}
 { }
 {}

 \theoremstyle{myrem}
 \newtheorem{remark}{Remark}
 \appto\remark{\leftskip\parindent}
 \appto\remark{\rightskip\parindent}

\numberwithin{equation}{section}
\numberwithin{theorem}{section}

\begin{document}

\begin{center}
{\Large {\textbf {Maps on random hypergraphs and random simplicial complexes }}}
 \vspace{0.58cm}

Shiquan Ren*,  Chengyuan Wu*,  Jie Wu*

\footnotetext[1]{* first authors.  The authors are the first authors contributed equally to the paper.}

\end{center}

\begin{abstract}
Let $L$ be a simplicial complex. In this paper, we study random sub-hypergraphs and random sub-complexes of $L$. By considering the minimal complex that a sub-hypergraph can be embedded in and the maximal complex that can be embedded in a sub-hypergraph, we define some maps on the space of probability functions on sub-hypergraphs of $L$. We study the compositions of these maps as well as their actions on the space of    probability functions.
\end{abstract}

{{\bf  Keywords}.   Hypergraphs,   Simplicial complexes,  Randomness, Probability}

{{\bf  Mathematics Subject Classification 2020}: 05C80, 05E45, 55U10, 	68P05}

 \section{Introduction}
Random topological objects, for example, random graphs,  random simplicial complexes,   random hypergraphs, etc.,   have important applications in large-data systems in computer science and engineering.  Among these random topological objects, random graphs are the simplest case.  The systematic study of random graphs was started by P. Erd\"{o}s and A. R\'{e}nyi  \cite{1959er,1960er} and  E.N. Gilbert \cite{1959g} around 1960.

Let $0\leq p\leq 1$.
In 1959, P. Erd\"{o}s and A. R\'{e}nyi   \cite{1959er} and E.N. Gilbert \cite{1959g} constructed the Erd\"{o}s-R\'{e}nyi model $G(n,p)$ of random graphs  by   choosing each  pair of  vertices in $V$ as an edge uniformly and independently at random with probability $p$.    In 1960,  thresholds for the connectivity of $G(n,p)$ were given in \cite{1960er}.  In recent decades, the clique complex of $G(n,p)$ was studied in \cite{cfh,clique}.

Random simplicial complexes are higher-dimensional generalizations of random graphs.  In 2006, N. Linial and R. Meshulam  \cite{y2-3} constructed
the Linial-Meshulam model $Y_2(n,p)$ of random $2$-complexes. They take the  complete graph on $V$ and choose each  $2$-simplex of the complete complex on $V$ uniformly and independently at random with probability $p$. The fundamental group   of $Y_2(n,p)$ was studied in \cite{6}. The homology groups of $Y_2(n,p)$ were studied in \cite{y2-4,y2-5}. The asphericity and the hyperbolicity of $Y_2(n,p)$ were studied in \cite{y2-1,y2-2}.

Let $d$ be a non-negative integer. In 2009,  R. Meshulam and  N. Wallach \cite{yd-5} generalized $Y_2(n,p)$  and constructed a model $Y_d(n,p)$  of random $d$-complexes. They  take the $(d-1)$-skeleton of the complete complex on $V$, then choose each  $d$-simplex of the complete  complex on $V$ uniformly and independently at random  with probability $p$.  The homology groups of  $Y_d(n,p)$ were studied in \cite{8,9,7}.  The cohomology of $Y_d(n,p)$ was studied in \cite{yd-1}. Some thresholds for the homology of $Y_d(n,p)$ were given in \cite{annals}. The eigenvalues of the Laplacian on $Y_d(n,p)$ were studied in  \cite{ yd-2}. The collapsibility property of $Y_d(n,p)$ was studied in  \cite{ yd-4,8}. And some sub-structures of $Y_d(n,p)$ were studied in \cite{yd-3}.

Let $0\leq r\leq n-1$ be an integer. Let $ 0\leq p_0,p_1,\ldots,p_{n-1}\leq 1$. Let  $\mathbf{p}=(p_0,p_1,\ldots,p_{n-1})$.  In 2016, $G(n,p)$, $Y_2(n,p)$, $Y_d(n,p)$ and  (the $r$-skeleton of) the clique complex of $G(n,p)$ were generalized universally to a multi-parameter model  of random complexes with probability function ${\rm P}_{n,r,\mathbf{p}}$  by A. Costa and M. Farber \cite{m1,m4}.
In 2017, the fundamental group of the final-generated complexes has been studied in \cite{m2}. The dimension has been studied in \cite{m3}.

Hypergraphs can be obtained by deleting certain faces of  simplcial complexes.
In this paper,   we construct the model of random hypergraphs as an generalization of random simplicial complexes.  We investigate certain maps on random hypergraphs and prove Theorem~\ref{th10}. With the helps of the  maps, we study the relations between random hypergraphs and random simplicial complexes and prove Theorem~\ref{th888}.

The remaining part of this paper is organized as follows.  In Section~\ref{sec999},  we give an outline of the main results.  In Section~\ref{sec2}, we study the map algebra generated by the maps. In Section~\ref{sec3}, we give some geometric characterizations of certain compositions of the maps.   In Section~\ref{sec4}, we prove Theorem~\ref{th10} and Theorem~\ref{th888}.

\subsection{An outline of the main results}\label{sec999}

  Let $n\geq 2$ be an integer. Let $V$ be a   set of $n$ points.  The  power set of $V$, denoted as $2^V$, is the collection of all subsets of $V$. We assume that the emptyset is not in $2^V$ if there is no extra claim.  A  {\it  hypergraph}  $H$ on $V$ is a subset of $2^V$.   In particular, $2^V$ is called the  {\it  complete hypergraph}  and the emptyset $\emptyset$ is called the   {\it  empty hypergraph}.  An element of  $H$ is called  a  {\it hyperedge}, and an element of a hyperedge is called a {\it vertex}. The  {\it  dimension}  of a hyperedge is the cardinality of the hyperedge minus one.  A hyperedge of dimension $d$ is called a {\it  $d$-hyperedge} for short.  The vertex set  of $H$, denoted as $V_H$, is the subset of $V$ consisting of all vertices of all hyperedges in $H$.  A  hypergraph $H'$   on  $V$  is said to be a   {\it  sub-hypergraph}  of $H$  if $H'\subseteq H$.
An   {\it  (abstract) simplicial complex}  $K$ on $V$ is a hypergraph on $V$ such that for any $\sigma\in K$ and any nonempty $\tau\subseteq \sigma$, $\tau\in K$. The hyperedges of $K$ are called {\it simplices}.  A simplicial complex $K'$ is said to be a  {\it  sub-complex}  of $K$ if $K'\subseteq K$.  Given a sub-complex $K'\subseteq K$, a  {\it  $d$-clique}  of   $K'$ in $K$ is a $d$-simplex $\sigma\in K$ such that for any $\tau\subsetneq \sigma$, $\tau\subseteq K'$.

  Let $L$ be a finite simplicial complex.  Let    $\mathcal{H}(L)$ be  the collection of all sub-hypergraphs of $L$.  A  {\it  random sub-hypergraph}  of $L$ is a probability function on $\mathcal{H}(L)$.  Let $D(\mathcal{H}(L))$ be the functional space of all probability functions on $\mathcal{H}(L)$.  Let ${\rm Map}(\mathcal{H}(L))$ be the  semigroup of all self-maps on $\mathcal{H}(L)$.  An element $T\in {\rm Map}(\mathcal{H}(L))$ induces a self-map $DT$ on $D(\mathcal{H}(L))$ by
\begin{eqnarray}\label{e-intro1}
DT(f)(H)=\sum_{TH'=H}f(H'),
\end{eqnarray}
 for any $f\in D(\mathcal{H}(L))$ and any $H\in \mathcal{H}(L)$.
 And a map $F$ from $\mathcal{H}(L)^{\times 2}$ to $\mathcal{H}(L)$ induces a map $DF$ from $D(\mathcal{H}(L))^{\times 2}$ to $D(\mathcal{H}(L))$ by
\begin{eqnarray}\label{e-intro2}
DF(f_1,f_2)(H)=\sum_{F(H_1,H_2)=H}f_1(H_1)f_2(H_2),
\end{eqnarray}
 for any $f_1,f_2\in D(\mathcal{H}(L))$ and any $H\in \mathcal{H}(L)$.
Let $\mathcal{K}(L)$ be the collection of all sub-complexes of $L$. Let ${\rm Map}(\mathcal{K}(L))$ be the semigroup of all self-maps on $\mathcal{K}(L)$.  A {\it  random sub-complex}  of $L$ is a probability function on $\mathcal{K}(L)$.  Let $D(\mathcal{K}(L))$ be the functional space of all probability functions on $\mathcal{K}(L)$.  Similar to (\ref{e-intro1}), an element of ${\rm Map}(\mathcal{K}(L))$ induces a self-map on $D(\mathcal{K}(L))$  and a map from $\mathcal{H}(L)$ to $\mathcal{K}(L)$ induces a map from $D(\mathcal{H}(L))$ to $D(\mathcal{K}(L))$.  And similar to (\ref{e-intro2}), a map $F$ from $\mathcal{K}(L)^{\times 2}$ to $\mathcal{K}(L)$ induces a map $DF$ from $D(\mathcal{K}(L))^{\times 2}$ to $D(\mathcal{K}(L))$.

\begin{definition}\cite{m1,m4}
Let $\Delta_n$ denote the complete simplicial complex on $n$ vertices. Let $\Delta_n^{(r)}$ be the $r$-skeleton of $\Delta_n$. An external face of a sub-complex $Y\subseteq \Delta_n$ is a simplex $\sigma\in\Delta_n$ such that $\sigma\notin Y$ but the boundary of $\sigma$ is contained in $Y$. We use $E(Y)$ to denote the set of all external faces of $Y$. Let $\mathbf{p}=(p_0,p_1,\ldots,p_r)$ with $0\leq p_i\leq 1$. We consider the probability space   $\mathcal{K}(\Delta_n^{(r)})$. The probability function is
\begin{eqnarray*}
{\rm P}_{n,r,\mathbf{p}}(Y)=\prod_{\sigma\in Y,~\dim\sigma\leq r}p_{\dim\sigma}\cdot \prod_{\sigma\in E(Y),~\dim\sigma\leq r}(1-p_{\dim\sigma}).
\end{eqnarray*}
\label{def1}
\end{definition}

The  probability function ${\rm P}_{n,r,\mathbf{p}}$ can be  obtained as follows (cf. \cite[Section~5.5]{contem}):
\begin{enumerate}[(i).]
\item
 We generate the $0$-skeleton     by choosing each vertex of $\Delta_n$ uniformly and independently at random with probability $p_0$.
  \item
  For each $0\leq k\leq r-2$, suppose the $k$-skeleton  is generated. Then we generate the $(k+1)$-skeleton   by choosing each $(k+1)$-clique of the $k$-skeleton in $\Delta_n^{(r)}$ uniformly and independently at random with probability $p_{k+1}$.
  \item
  The final-generated complexes have  probability function ${\rm P}_{n,r,\mathbf{p}}$.  Thus
\begin{eqnarray*}
\sum_{Y\subseteq \Delta_n^{(r)}}{\rm P}_{n,r,\mathbf{p}}(Y)=1.
\end{eqnarray*}
  \end{enumerate}

\smallskip

Let $p: L\to [0,1]$ be an arbitrary function.  In the next definition, we generalize Definition~\ref{def1}  and give a model of  random sub-complex   in  $L$.

 \begin{definition}
[Generalization of Definition~\ref{def1}]
An external face of a sub-complex $Y\subseteq L$ is a simplex $\sigma\in L$ such that $\sigma\notin Y$ but the boundary of $\sigma$ is contained in $Y$. We
use $E(Y)$ to denote the set of all external faces of $Y$ in $L$.  We consider the probability space  $\mathcal{K}(L)$. The probability function is given by
\begin{eqnarray*}
{\rm P}_{L, {p}}(Y)=\prod_{\sigma\in Y}p(\sigma)\cdot \prod_{\sigma\in E(Y)}\big(1-p(\sigma)\big).
\end{eqnarray*}
In particular, suppose $\dim L=r$ and there exists $0\leq p_0,p_1,\ldots,p_r\leq 1$ such that for each $\sigma\in L$,  $p(\sigma)=p_{\dim\sigma}$. Then we
denote ${\rm P}_{L, {p}}$ as ${\rm P}_{L,\mathbf{p}}$. We have ${\rm P}_{\Delta_n^{(r)},\mathbf{p}}= {\rm P}_{n,r,\mathbf{p}}$.
\label{def2}
\end{definition}
 The random complex  model in  Definition~\ref{def2} can be generated as follows:
 \begin{enumerate}[(i).]
 \item
  Choose each vertex $v\in L$   independently at random with probability $p(v)$.
\item
 For each $0\leq k\leq \dim L-1$, suppose the $k$-skeleton  is generated. Then we generate the $(k+1)$-skeleton   by choosing each $(k+1)$-clique $\sigma$ of the $k$-skeleton in $L$   independently at random with probability $p(\sigma)$.
 \item
The final-generated complexes have the probability function ${\rm P}_{L,p}$. Hence
  \begin{eqnarray*}
\sum_{Y\subseteq L}{\rm P}_{L,{p}}(Y)=1.
\end{eqnarray*}
\end{enumerate}

\smallskip

 In the next definition, we consider an analogue of Definition~\ref{def2}  and give a model of  random sub-hypergraph   in  $L$.

 \begin{definition}
[Hypergraphic analogue  of Definition~\ref{def2}]
 We consider the probability space   $\mathcal{H}(L)$. The probability function is given by
\begin{eqnarray*}
\bar{{\rm P}}_{L,{p}}(H)=\prod_{\sigma\in H}p(\sigma)\cdot \prod_{\sigma\notin H}\big(1-p(\sigma)\big).
\end{eqnarray*}
In particular, suppose $\dim L=r$ and there exists $0\leq p_0,p_1,\ldots,p_r\leq 1$ such that for each $\sigma\in L$,  $p(\sigma)=p_{\dim\sigma}$. Then we
denote $\bar{{\rm P}}_{L, {p}}$ as $\bar{{\rm P}}_{L,\mathbf{p}}$.
\label{def3}
\end{definition}

 The  random hypergraph in Definition~\ref{def3} can be  generated as follows.
 We choose each  simplex $\sigma\in L$   independently at random with probability $p(\sigma)$.   We obtain a  hypergraph.  The probability function of these independent trials is $\bar{{\rm P}}_{L,{p}}$. Therefore,
 \begin{eqnarray*}
\sum_{H\subseteq L}\bar{{\rm P}}_{L,{p}}(H)=1.
\end{eqnarray*}

We let $H\in \mathcal{H}(L)$.  We study the minimal  complex  $\Delta H$ that $H$ can be embedded in, the maximal  complex $\delta H$ that can be embedded in $H$, and the complement hypergraph $\gamma H$  in $L$. By composing $\Delta,\delta$ and $\gamma$ iteratively, we obtain a sub-semigroup $G$ of ${\rm Map}(\mathcal{H}(L))$. And $G$ induces a semi-group $DG$ of self-maps on $D(\mathcal{H}(L))$.  Moreover,  by composing $\Delta\gamma$ and $\delta\gamma$  iteratively, we obtain a sub-semigroup $G'$ of ${\rm Map}(\mathcal{K}(L))$.  And    $G'$ induces a semi-group $DG'$ of self-maps on $D(\mathcal{K}(L))$. We   study the  map algebra acting on $D(\mathcal{H}(L))$ induced from $\Delta$, $\delta$ and $\gamma$, and the map algebra acting on $D(\mathcal{K}(L))$ induced from $\Delta\gamma$ and $\delta\gamma$.    In particular, we give some explicit expressions for the actions of the map algebra on $\bar{{\rm P}}_{L,p}$ and ${{\rm P}}_{L,p}$.  As consequences, we give algorithms generating large sparse random hypergraphs with probability function $\bar{{\rm P}}_{\Delta_n,\mathbf{p}}$, and algorithms generating large sparse random simplicial complexes with probability function ${{\rm P}}_{\Delta_n,\mathbf{p}}$.

\smallskip

Let $\sigma\in L$. The characteristic probability $\varphi_\sigma$ is the function
\begin{eqnarray*}
\varphi_\sigma(\sigma')=\left\{
   \begin{aligned}
  0,  {\rm  \ \ \ if ~}\sigma'\neq\sigma;\\
  1, {\rm  \ \ \ if ~}\sigma'=\sigma.
   \end{aligned}
   \right.
\end{eqnarray*}
A {\it path} $s$ in $L$  is a sequence of simplices $\sigma_1 \sigma_2 \ldots \sigma_m$ in $L$ such that the intersection of any two consecutive simplices is nonempty.  We call $m$ the length of $s$. Given two simplices $\sigma,\sigma'\in L$,  the {\it  distance} between $\sigma$ and $\sigma'$ is
\begin{eqnarray*}
d(\sigma,\sigma')=\min\{m \mid s=\sigma_1\sigma_2\ldots\sigma_m {\rm  ~is~ a~ path~ in~ }L, ~\sigma_1=\sigma, ~\sigma_m=\sigma'\}.
\end{eqnarray*}
The {\it  diameter} of  $L$ is
${\rm diam}L=\max _{\sigma,\sigma'\in L}d(\sigma,\sigma')$.
 Let  $m=\max_{\sigma,\sigma'\in\max(L)}d(\sigma,\sigma')$.
 The first main result  of this paper is  the next   Theorem.

\begin{theorem}[Main Result I]\label{th10}
Let   $k$ be a non-negative integer. Let ${\rm Ext}=\Delta\gamma\delta\gamma$ and ${\rm Int}=\delta\gamma\Delta\gamma$. Let $f\in D(\mathcal{H}(L))$.
\begin{enumerate}[(a).]
\item
 If   $k\geq  {\rm diam}L$, then
$(D{\rm Ext})^{k}(f)=f(\emptyset)\varphi_\emptyset+ \big(1-f(\emptyset)\big)\varphi_L$.
\item
 If  $k\geq   {\rm diam}L$,    then
$(D{\rm Int})^{k}(f)=\big(1-f(L)\big)\varphi_\emptyset+ f(L)\varphi_L$.
 \item
There exists  $f\in D(\mathcal{H}(L))$  such that
$$
f, (D{\rm Ext})(f), (D{\rm Ext})^{2}(f), \ldots, (D{\rm Ext})^{m-1}(f), (D{\rm Ext})^{m}(f)
$$
are distinct;
\item
There exists $f\in  D(\mathcal{H}(L))$  such that
$$
f, (D{\rm Int})(f), (D{\rm Int})^{2}(f), \ldots, (D{\rm Int})^{m-1}(f), (D{\rm Int})^{m}(f)
$$
are distinct;
\item
Let $k\geq 1$. Then  for any probability function $f\in D(\mathcal{H}(L))$, the probability that ${\rm Ext}^{k-1}(\gamma H)\subseteq \gamma {\rm Int}^k(H)$  and  the probability that ${\rm Int}^k(H)\subseteq {\rm Ext}^{k+1}(\gamma H)$
 are $1$;

\item
 For any probability function $f\in D(\mathcal{H}(L))$, the probability that $\Delta H\subseteq {\rm Int}\circ{\rm Ext}(H)$ is greater than or equal to  $\sum_{V_H\subseteq H} f(H)$ where the condition $V_H\subseteq H$ means that each vertex of $H$  is a $0$-hyperedge.
\end{enumerate}
\end{theorem}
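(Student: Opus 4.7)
The plan is to first unpack what the operators do set-theoretically. A direct computation gives the characterizations: $\sigma\in\text{Ext}(H)$ iff there exist $\tau\in H$ and $\sigma''\in L$ with $\sigma\cup\tau\subseteq\sigma''$ (so $\sigma$ lies in the closed star of some element of $H$), and dually $\sigma\in\text{Int}(H)$ iff every $\rho\in L$ meeting $\sigma$ already lies in $H$. These follow from the explicit expressions $\gamma\delta\gamma(H)=\{\sigma\in L:\sigma\supseteq\tau\text{ for some }\tau\in H\}$ and $\gamma\Delta\gamma(H)=\{\sigma\in L:\rho\in H\text{ for every }\rho\in L\text{ with }\rho\supseteq\sigma\}$, together with the fact that $\Delta$ closes under faces and $\delta$ restricts to maximal subcomplexes. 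With these in hand, (a), (b) reduce to statements about iterated closed stars and (e), (f) reduce to set-theoretic containments.

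For (a), $\text{Ext}(\emptyset)=\emptyset$ is immediate, and $H\subseteq\text{Ext}(H)$ always. The key claim is $\text{Ext}^{\text{diam}\,L}(H)=L$ for every non-empty $H$. Given $\tau\in H$ and any $\sigma\in L$, I would pick a shortest path $\tau=\sigma_1,\sigma_2,\ldots,\sigma_m=\sigma$ with $m\leq\text{diam}\,L$, choose $v_i\in\sigma_i\cap\sigma_{i+1}$ for $i=1,\ldots,m-1$, and form the interpolated sequence $\sigma_1,\{v_1\},\{v_2\},\ldots,\{v_{m-1}\},\sigma_m$. Every consecutive pair is contained in a common simplex of $L$ ($\sigma_i$ and $\{v_i\}$ in $\sigma_i$; $\{v_i\}$ and $\{v_{i+1}\}$ in $\sigma_{i+1}$; $\{v_{m-1}\}$ and $\sigma_m$ in $\sigma_m$). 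Induction on the position in this augmented sequence, using the characterization of $\text{Ext}$, shifts each entry into one more iterate, yielding $\sigma\in\text{Ext}^m(H)\subseteq\text{Ext}^{\text{diam}\,L}(H)$. Applying the formula $(D\text{Ext})^k(f)=\sum_H f(H)\varphi_{\text{Ext}^k(H)}$, which follows from linearity of $DT$ on point masses, then yields (a). Part (b) is the formal dual: the analogous alternating-path argument applied to the characterization of $\text{Int}$, together with $\text{Int}(L)=L$.

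For (c) and (d), I would produce an explicit $f$. Picking $\tau,\sigma^{*}\in L$ with $d(\tau,\sigma^{*})=\text{diam}\,L$ and setting $f=\varphi_{\{\tau\}}$, the intermediate simplices along a geodesic first enter the iterates one step at a time, producing strict growth $\text{Ext}^{k-1}(\{\tau\})\subsetneq\text{Ext}^{k}(\{\tau\})$ for $k=1,\ldots,\text{diam}\,L$. The delicate ingredient is the lower bound, $\sigma^{*}\notin\text{Ext}^{\text{diam}\,L-1}(\{\tau\})$; I would prove this by induction on $k$, tracking which simplices can possibly appear after $k$ closed-star iterations and invoking geodesic minimality to exclude $\sigma^{*}$ prematurely. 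Part (d) is the dual construction inside $\text{Int}$.

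For (e) and (f), I first observe that all displayed events depend only on $H$ deterministically, so ``probability one'' reduces to the set-theoretic containment holding for every $H$ in the support of $f$. The inclusions $\text{Ext}^{k-1}(\gamma H)\subseteq\gamma\,\text{Int}^k(H)$ and $\text{Int}^k(H)\subseteq\text{Ext}^{k+1}(\gamma H)$ follow from iterating the basic chain $\text{Int}(H)\subseteq\delta H\subseteq H\subseteq\Delta H\subseteq\text{Ext}(H)$ combined with $\gamma\gamma=\mathrm{id}$ and the monotonicity of $\text{Ext}$ and $\text{Int}$. The third inclusion I would extract from the observation that any $\sigma\in\text{Ext}(\text{Int}(H))$ sits inside a common supersimplex $\sigma''\in L$ with some $\tau\in\text{Int}(H)$; the absorbing property of $\text{Int}$ then forces $\sigma''$ and its relevant faces into $H$, placing $\sigma$ in $\delta H$. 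For (f), if $V_H\subseteq H$ and $\sigma\in\Delta H$ with $\sigma\subseteq\tau\in H$, then for any $\rho\in L$ meeting $\sigma$ I pick $v\in\rho\cap\sigma\subseteq\tau$; since $\{v\}\in V_H\subseteq H$, the simplex $\rho$ itself serves as the common supersimplex of $\rho$ and $\{v\}$, giving $\rho\in\text{Ext}(H)$. Hence $\sigma\in\text{Int}(\text{Ext}(H))$, so the event $\{V_H\subseteq H\}$ is contained in $\{\Delta H\subseteq\text{Int}\circ\text{Ext}(H)\}$, which yields the probability bound. The main obstacle I anticipate is the tightness argument in (c), (d); all other steps reduce to routine algebraic or combinatorial unpacking.
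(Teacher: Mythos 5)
Parts (a), (b) and (f) of your argument are sound and essentially follow the paper's route: your ``interpolated sequence'' of vertices is the paper's broad-path machinery (Lemma~\ref{l-e-i}) in disguise, and your vertex-picking argument for (f) is exactly Corollary~\ref{co-tub-1}~(b). The genuine gaps are in (c), (d) and in the latter half of (e). For (c) you take an \emph{arbitrary} pair $\tau,\sigma^{*}$ realizing the diameter and set $f=\varphi_{\{\tau\}}$, whereas the paper's proof of Proposition~\ref{th108}~(c) insists that the pair consist of \emph{maximal} faces; this is not cosmetic. If $\tau$ is not maximal, a single application of $\text{Ext}$ already produces $\Delta\{\mu\in\max(L)\mid\tau\subseteq\mu\}$, which can swallow several steps of your geodesic at once, and your deferred ``delicate ingredient'' $\sigma^{*}\notin\text{Ext}^{\text{diam}L-1}(\{\tau\})$ is false in general. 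Concretely, let $L$ be the $1$-complex with vertices $a,b,c$ and edges $ab,bc$: with the paper's definition of distance one has $d(\{a\},\{c\})=4=\text{diam}\,L$, yet $\text{Ext}^{2}(\{\{a\}\})=L$, so your sequence has only three distinct terms instead of the required five. The bookkeeping that replaces your unspecified induction is Lemma~\ref{l10} together with the broad-path description of $\text{Ext}^{k}$, and it only functions when the starting face is maximal (and, in this example, no pair of maximal faces attains the diameter at all, so the construction really does hinge on that extra hypothesis rather than on a routine induction).

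In (e), the inclusions $\text{Ext}^{k-1}(\gamma H)\subseteq\gamma\text{Int}^{k}(H)\subseteq\text{Ext}^{k+1}(\gamma H)$ do not follow from ``iterating the basic chain $\text{Int}(H)\subseteq H\subseteq\text{Ext}(H)$ combined with monotonicity'': the whole content is the index shift by exactly one in each direction, which a qualitative chain cannot produce. The paper proves it by explicit path surgery (appending or prepending a single simplex to convert a broad path witnessing membership in $\text{Ext}^{k-1}(\gamma H)$ into an intersection-path witnessing membership in $\gamma\text{Int}^{k}(H)$, and conversely), and something of that nature is unavoidable. Finally, your argument for $\text{Ext}\circ\text{Int}(H)\subseteq\delta H$ has a hole: from $\tau\in\text{Int}(H)$ and $\sigma\cup\tau\subseteq\sigma''$ you may conclude $\sigma''\in H$, since $\sigma''$ meets $\tau$, but to place $\sigma$ in $\delta H$ you need every face $\rho\subseteq\sigma$ to lie in $H$, and such a $\rho$ need not meet $\tau$ at all, so the ``absorbing property of $\text{Int}$'' tells you nothing about it. The paper routes this inclusion through the neighborhood operator of Proposition~\ref{pr-tub-1}, i.e., through the claim that $\tau\in\text{Int}(H)$ forces every \emph{face} of every simplex meeting $\tau$ into $H$ --- a strictly stronger property than the characterization of $\text{Int}$ you are using, and precisely where the real work (and, in fact, the delicacy of the statement itself) lies.
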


Consider the spaces of probability functions
\begin{eqnarray*}
&F(\mathcal{H}(L))=\{\bar{{\rm P}}_{L, {p}}\mid p: L\longrightarrow [0,1]\},\\
&F(\mathcal{K}(L))=\{{{\rm P}}_{L, {p}}\mid p: L\longrightarrow [0,1]\}.
\end{eqnarray*}
 Then $F(\mathcal{H}(L))$ is a subspace of $D(\mathcal{H}(L))$ and  $F(\mathcal{K}(L))$ is a subspace of $D(\mathcal{K}(L))$.   Let $\cap$ and $\cup$ be the intersection   and  the union of hypergraphs.
The second main result of this paper is the next theorem.

\begin{theorem}[Main Result II]\label{th888}
The map $D\gamma$ maps $F(\mathcal{H}(L))$ to itself. The maps $D\Delta$ and $D\delta$ map   $F(\mathcal{H}(L))$ to  $F(\mathcal{K}(L))$. The map  $D\cap$ maps  $F(\mathcal{H}(L))^{\times 2}$ to $F(\mathcal{H}(L))$,  and maps  $F(\mathcal{K}(L))^{\times 2}$ to $F(\mathcal{K}(L))$.  And the map  $D\cup$ maps
 $F(\mathcal{H}(L))^{\times 2}$ to $F(\mathcal{H}(L))$. Precisely,

 \begin{enumerate}[(a). ]
 \item
 $D\gamma$ sends $\bar{{\rm P}}_{L,p}$ to $\bar{{\rm P}}_{L,1-p}$;
 \item
 $D\Delta$ sends $\bar{{\rm P}}_{L,p}$ to
  a  random sub-simplicial complex  of $L$  given by
\begin{eqnarray}\label{th888-e1}
\big(D\Delta(\bar{{\rm P}}_{L,p})\big)(K)= \Big(\prod_{\tau\in {\rm max}(K)} p(\tau)\Big)\Big(\prod_{\tau\notin  K}\big(1-p(\tau)\big)\Big)
\end{eqnarray}
for any  $K\in \mathcal{K}(L)$;

 \item
 $D\delta$ sends $\bar{{\rm P}}_{L,p}$ to
  a  random sub-simplicial complex  of $L$  given by
 \begin{eqnarray}\label{th888-e2}
\big  ( D\delta(\bar{{\rm P}}_{L,p})\big)(K)= \sum_{\delta H=K} \prod_{\sigma\in H} p(\sigma) \prod_{\sigma\notin H} \big(1-p(\sigma)\big)
\end{eqnarray}
for any  $K\in \mathcal{K}(L)$;

 \item
$D\cap$ sends     the pair $(\bar{{\rm P}}_{L,{p}'},\bar{{\rm P}}_{L,{p}''})$ to $\bar{{\rm P}}_{L,{p}'p''}$;

\item
$D\cup$ sends the pair $(\bar{{\rm P}}_{L,{p}'},\bar{{\rm P}}_{L,{p}''})$ to $\bar{{\rm P}}_{L,r,1-(1-p')(1-p'')}$.

 \end{enumerate}
\end{theorem}

 \section{Map algebras on hypergraphs and simplicial complexes}\label{sec2}

 In this section, we study the minimal  complex  $\Delta H$ that $H$ can be embedded in, the maximal  complex $\delta H$ that can be embedded in $H$, and the complement hypergraph $\gamma H$  in $L$. We study the map algebras of the compositions of $\Delta,\delta$ and $\gamma$ as well as the intersections and unions. We also study the restrictions of the compositions of $\Delta,\delta$ and $\gamma$ on simplicial complexes.

 \subsection{The map algebra on hypergraphs}\label{s2.1}

We consider the maps
$\Delta,\delta: \mathcal{H}(L)\longrightarrow \mathcal{K}(L)$, and $\gamma : \mathcal{H}(L)\longrightarrow \mathcal{H}(L)$ given by
\begin{eqnarray*}
\Delta H&=&\{\sigma\in L\mid {\rm ~there~ exists~ }\tau\in H{\rm ~ such ~that~ } \sigma\subseteq \tau\};\\
\delta H&=&\{\sigma\in L\mid {\rm ~ for~ any~ }\tau\subseteq \sigma,~ \tau\in H\};\\
\gamma H&=&\{\sigma\in L\mid \sigma\notin H\}
\end{eqnarray*}
for any  $H\in \mathcal{H}(L)$.
Then
(i). $\gamma^2={\rm id}$;
(ii). $\Delta\delta=\delta$;
(iii). $\delta\Delta=\Delta$;
(iv). $\Delta^2=\Delta$;
(v).  $\delta^2=\delta$;
(vi). $(\Delta\gamma\Delta\gamma)^2=\Delta\gamma\Delta\gamma$;
(vii). $(\delta\gamma\delta\gamma)^2=\delta\gamma\delta\gamma$.
The equalities (i) - (v) are  straight-forward.    Let $\max(L)$ be the set of all maximal faces of $L$. We   prove (vi) and (vii).

\begin{proof}[Proof of  (vi)]
 Let $H\in\mathcal{H}(L)$. Then
\begin{eqnarray}
\Delta\gamma\Delta\gamma H&=&\Delta\gamma\Delta\{\sigma\in L\mid \sigma\notin H\}\nonumber\\
&=&\Delta\gamma\{\sigma\in L\mid{\rm~ there~ exists~ }\sigma\subseteq\tau {\rm ~ such ~that~ } \tau\notin H\}\nonumber\\
&=&\Delta\{\sigma\in L\mid {\rm ~there~ does ~not~ exist~  any ~}\sigma\subseteq\tau {\rm  ~such~ that~ }\tau\notin H\}\nonumber\\
&=&\Delta\{\sigma\in L\mid {\rm ~for ~any~ }\sigma\subseteq\tau,~\tau\in H\}\label{e-int-0}\\
&=&\Delta\{\sigma\in \max(L)\mid  \sigma\in H\}\nonumber\\
&=&\Delta\big(\max(L)\cap H\big). \nonumber
\end{eqnarray}
Thus
\begin{eqnarray*}
(\Delta\gamma\Delta\gamma)^2 H
=\Delta \Big( \max(L)\cap \Delta\big(\max(L)\cap H\big)\Big)
=\Delta\big(\max(L)\cap H\big).
\end{eqnarray*}
Since $H$ is arbitrary, we have (vi).
\end{proof}

\begin{proof}[Proof of (vii)]
 Let $H\in\mathcal{H}(L)$. Then
\begin{eqnarray*}
\delta\gamma\delta\gamma H&=&\delta\gamma\delta\{\sigma\in L\mid \sigma\notin H\}\\
&=&\delta\gamma\{\sigma\in L\mid {\rm ~for~ any~ }\tau\subseteq \sigma, ~ \tau\notin H\}\\
&=&\delta\{\sigma\in L\mid{\rm ~there~ exists~ }\tau\subseteq\sigma,~ \tau\in H\}\\
&=&\{\sigma\in L\mid{\rm ~for~ any~ }\sigma'\subseteq\sigma, {\rm ~there~ exists~ }\tau\subseteq\sigma', ~\tau\in H\}\\
&=&\{\sigma\in L\mid {\rm ~for~ any~ vertex~ } v {\rm ~ of ~}\sigma,~ \{v\} \in H\}.
\end{eqnarray*}
Hence $\delta\gamma\delta\gamma H$ is the sub-complex of $L$ spanned by all the $0$-hyperedges in $H$. And $(\delta\gamma\delta\gamma)^2 H$ is the sub-complex of $L$ spanned by all the $0$-hyperedges in $\delta\gamma\delta\gamma H$. Since the $0$-hyperedges of $H$ and the $0$-hyperedges of $\delta\gamma\delta\gamma H$ are same, we have
$(\delta\gamma\delta\gamma)^2 H=\delta\gamma\delta\gamma  H$.
Since $H$ is arbitrary, we have (vii).
\end{proof}

Let $G$ be
the  semi-group generated by $\Delta, \delta,\gamma$ modulo the relations (i) - (vii).  The multiplication of $G$ is the composition of maps.  The unit of $G$ is ${\rm id}$, the identity map on $\mathcal{H}(L)$.  There are four types of elements in $G$:
(1).
$\gamma x_1\gamma x_2\ldots \gamma x_k\gamma$;
(2).
 $x_1\gamma x_2\ldots \gamma x_k\gamma$;
(3).
 $\gamma x_1\gamma x_2\ldots \gamma x_k$;
(4).
 $  x_1\gamma x_2\ldots \gamma x_k $.
\noindent In (1)  - (4), $k$ is a nonnegative integer,  $x_i=\Delta$ or $\delta$, and there does not exist any  $4$ consecutive $x_i$'s that take the same value.
For any $w_1,w_2\in G$ and any  $H_1,H_2\in \mathcal{H}(L)$, let
\begin{eqnarray*}
(w_1+w_2)(H_1,H_2)&=&w_1(H_1)\cup w_2(H_2); \\
(w_1\wedge w_2)(H_1,H_2)&=&w_1(H_1)\cap w_2(H_2).
\end{eqnarray*}
 For any  positive integer $t$, let
 \begin{eqnarray*}
 G^t&=&\{(\ldots (w_1*w_2)*\ldots *w_t)\mid *=\wedge {\rm  or }+,  w_1,w_2,\ldots,w_t\in G\\
  && {\rm  ~with~ any~ } t-2 {\rm~  brackets~ } (\cdot ) {\rm  ~giving ~the ~order~ of~ evaluation}\}.
 \end{eqnarray*}
For any $W\in G^t$, $W$ is a map from $\mathcal{H}(L)^{\times t}$ to $\mathcal{H}(L)$.
Some relations among $w\in G$, $+$ and $\wedge$ are:
\begin{enumerate}[(I).]
\item\label{e}
$(w_1\wedge w_2)\wedge w_3=w_1\wedge (w_2\wedge w_3)$;
\item\label{rel-b}
$(w_1+ w_2)+w_3=w_1+ (w_2+ w_3)$;
\item\label{j}
$\gamma(w_1+w_2)=(\gamma w_1)\wedge (\gamma w_2)$, or equivalently, $\gamma (w_1\wedge w_2)=\gamma w_1 + \gamma w_2$;
\item\label{j-1}
$\Delta(w_1+w_2)=(\Delta w_1) + (\Delta w_2)$;
\item\label{j-3}
$\delta(w_1\wedge w_2)=\delta w_1 \wedge \delta w_2$.
\end{enumerate}
  Here  $w_1,w_2,w_3\in G$,
and $0$ is the constant map sending  $\mathcal{H}(L)$ to $\emptyset$.
(I) - (III) are straight-forward. Let $H,H'\in \mathcal{H}(L)$. Let $H_1=w_1(H)$ and $H_2=w_2(H')$.  We prove (\ref{j-1}) and (\ref{j-3}).

\begin{proof}[Proof of (\ref{j-1})] In order to prove (\ref{j-1}), we only need to prove that for any $H_1, H_2\in \mathcal{H}(L)$,
$\Delta(H_1\cup H_2)=\Delta H_1\cup\Delta H_2$.
Let $\sigma\in L$. Then $\sigma\in \Delta(H_1\cup H_2)$ iff. there exists $\tau\in H_1\cup H_2$ such that $\sigma\subseteq \tau$. This happens iff. there exists $\tau\in H_1$ such that $\sigma\subseteq\tau$ or there exists $\tau\in H_1$ such that $\sigma\subseteq\tau$.   Hence $\sigma\in \Delta(H_1\cup H_2)$ iff. $\sigma\in \Delta H_1$ or $\sigma\in\Delta H_2$, that is, $\sigma\in \Delta H_1\cup\Delta H_2$.
\end{proof}

\begin{proof}[Proof of (\ref{j-3})]
In order to prove (\ref{j-3}), we only need to prove that for any $H_1, H_2\in \mathcal{H}(L)$,
$\delta(H_1\cap H_2)=\delta H_1\cap\delta H_2$.
Let $\sigma\in L$. Then $\sigma\in \delta(H_1\cap H_2)$ iff.  for any $\tau\subseteq\sigma$, $\tau\in H_1\cap H_2$. This happens iff.  for any $\tau\subseteq\sigma$, $\tau\in H_1$ and  $\tau\in H_2$. That is, $\sigma\in \delta H_1\cap\delta H_2 $.
\end{proof}

\begin{remark}
Similar to the proofs of (\ref{j-1}) and (\ref{j-3}),  for any  $H_1, H_2\in \mathcal{H}(L)$,
$\Delta(H_1\cap H_2) \subseteq  \Delta H_1\cap\Delta H_2$ and
$\delta(H_1\cup H_2) \supseteq  \delta H_1\cup \delta H_2$.
Hence for any $H,H'\in\mathcal{H}(L)$,
\begin{eqnarray*}
\big(\Delta(w_1\wedge w_2)\big)(H,H')&\subseteq& (\Delta w_1\wedge \Delta w_2)(H,H'), \\
\big(\delta(w_1+ w_2)\big)(H,H')&\supseteq& (\delta w_1+ \delta w_2)(H,H').
\end{eqnarray*}
\end{remark}

 \subsection{The map algebra on simplicial complexes}

Let $w\in G$. A subset $S$ of $\mathcal{H}(L)$ is called an invariant subspace of $w$ if for any $H\in S$, $w(H)\in S$. We  consider   $w\in G$  such that $\mathcal{K}(L)$ is an invariant subspace of $w$.  The collection of all such $w$ forms a subgroup $G_1$ of  $G$.   Since both $\Delta$ and $\delta$ act on $\mathcal{K}(L)$ identically, we take an equivalent relation $\sim$ identifying both $\Delta$ and $\delta$ as the unit element of $G_1$. We denote the quotient group $G_1/\sim$ as $G'$.

Precisely, $G'$ can be constructed as follows.
Let $\alpha=\Delta\gamma$ and $\beta=\delta\gamma$.  Let $G'$ be
the  semi-group generated by $\alpha$ and $\beta$ modulo the relations
 (i)'. $\alpha^4=\alpha^2$;
 (ii)'. $\beta^4=\beta^2$.
The multiplication of $G'$ is the composition of maps. The unit of $G'$ is ${\rm id}$, the identity map on $\mathcal{K}(L)$.
The elements in $G'$ are:
(1)'. $\alpha^{m_1}\beta^{n_1}\ldots \alpha^{m_k}\beta^{n_k}$;
(2)'. $\beta^{n_1}\alpha^{m_2}\ldots \alpha^{m_k}\beta^{n_k}$;
(3)'. $\alpha^{m_1}\beta^{n_1}\ldots \beta^{n_{k-1}}\alpha^{m_k}$;
(4)'. $ \beta^{n_1}\alpha^{m_2}\ldots \beta^{n_{k-1}}\alpha^{m_k} $.
Here $k$ is a nonnegative integer and  $1\leq m_i,n_i\leq 3$, $i=1,2,\ldots$
Similar to Subsection~\ref{s2.1}, we  define $+$ and $\wedge$ for the elements in $G'$. We construct $G'^t$ for all positive integer $t$.  Each element $W'\in G'^t$ is a map from $\mathcal{K}(L)^{\times t}$ to $\mathcal{K}(L)$.

\section{Some characterizations of the maps}\label{sec3}

Let ${\rm Ext}=\Delta\gamma\delta\gamma$ and ${\rm Int}=\delta\gamma\Delta\gamma$.  In this section, we study the properties of the maps ${\rm Ext}$ and ${\rm Int}$. We give some geometric characterizations of ${\rm Ext}$ and ${\rm Int}$. In Subsection~\ref{s3.1}, we use paths in complexes to characterize the powers of ${\rm Ext}$ and ${\rm Int}$. In Subsection~\ref{s3.2}, we use neighborhoods of sub-complexes to study ${\rm Ext}$, ${\rm Int}$ and their compositions.

\subsection{Powers of maps and paths}\label{s3.1}

 Let $H\in \mathcal{H}(L)$.  Let ${\rm Ext}(H)=\Delta\gamma\delta\gamma(H)$. Then
  \begin{eqnarray*}
 {\rm Ext}(H)&=&\Delta\{\tau\in L\mid {\rm~ there~ exists~ } \sigma\in H {\rm  ~ such~ that~ }\sigma\subseteq\tau\}\\
 &=&\{\tau\in L\mid {\rm ~there~ exists~ } \tau\subseteq\tau' \\
 &&{\rm ~ such~ that~  there~ exists ~} \sigma\in H{\rm ~ with~ }\sigma\subseteq\tau'\}.
 \end{eqnarray*}
  Hence ${\rm Ext}(H)$ is the sub-complex of $L$ obtained by extending each hyperedge $\sigma$ of $H$ to a maximal face of $L$ containing $\sigma$.  We call ${\rm Ext}(H)$ the {\bf extension} of $H$.
We notice that every maximal face of ${\rm Ext}(H)$ is in $\max (L)$, and
\begin{eqnarray}
{\rm Ext}(H)&=&\Delta\big(\max(L)\cap{\rm Ext}(H)\big)\nonumber\\
&=&\Delta\{\tau_1\in\max(L)\mid {\rm  ~there~ exists~ }\sigma\in H{\rm  ~such~ that~ }\sigma\subseteq \tau_1\}. \label{e-ext-0}
\end{eqnarray}
   For any $k\geq 2$, by an induction on $k$ and  (\ref{e-ext-0}),
    \begin{eqnarray}
  {\rm Ext}^k(H)
   &=&\Delta\{\tau_k\in\max(L)\mid {\rm  ~there~ exists~ }\tau_1,\tau_2,\ldots,\tau_{k-1}\in\max(L) {\rm~  and~ }\sigma\in H \nonumber\\
   &&{\rm  ~such ~that~  }\tau_i\cap \tau_{i-1}\neq\emptyset {\rm ~ for~ any~ } 2\leq i\leq k {\rm  ~and~ }\sigma\subseteq \tau_1\}. \label{e-ext-2}
   \end{eqnarray}

A path $s=\sigma_1\sigma_2\ldots\sigma_m$ in $L$ is called a {\bf broad path} if for each $1\leq i\leq m$, $\sigma_i$ is a maximal face of $L$. For any path $s$ in $L$, if we extend each $\sigma$ of $s$ to be a maximal face $\tau\in \max(L)$ such that $\sigma\subseteq\tau$, then we obtain a broad path $s'$.

      \begin{lemma}
  Let $\sigma,\sigma'\in \max(L)$ with $d(\sigma,\sigma')=n$.   Then   there exists a broad path of length $n$ starting from $\sigma$ and ending at $\sigma'$.
    \end{lemma}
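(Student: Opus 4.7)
The plan is to start from a minimal witness for the distance and thicken each of its simplices to a maximal face, showing that the thickening remains a path and hence becomes a broad path of the same length.

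First I would invoke the definition of distance to choose a path $s=\sigma_1\sigma_2\ldots\sigma_n$ in $L$ realizing $d(\sigma,\sigma')=n$, with $\sigma_1=\sigma$ and $\sigma_n=\sigma'$. Since $\sigma$ and $\sigma'$ are themselves maximal, I set $\tau_1=\sigma$ and $\tau_n=\sigma'$. For each intermediate index $1<i<n$, the simplex $\sigma_i$ lies in the finite complex $L$, so there exists at least one $\tau_i\in\max(L)$ with $\sigma_i\subseteq\tau_i$; pick one such $\tau_i$.

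Next I would verify that $\tau_1\tau_2\ldots\tau_n$ is a path. For each $1\leq i\leq n-1$, the inclusions $\sigma_i\subseteq\tau_i$ and $\sigma_{i+1}\subseteq\tau_{i+1}$ give
\begin{eqnarray*}
\tau_i\cap\tau_{i+1}\supseteq \sigma_i\cap \sigma_{i+1}\neq \emptyset,
\end{eqnarray*}
where the nonemptiness on the right is part of the hypothesis that $s$ is a path. By construction every $\tau_i$ lies in $\max(L)$, so the sequence is a broad path. Its length is $n$ since the number of terms in the sequence is unchanged by the thickening procedure.

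The only mild point to watch is that the thickened sequence cannot accidentally have length shorter than $n$: but the length is just the cardinality of the index set $\{1,\ldots,n\}$, which is fixed at the outset, so no collapse occurs even if some $\tau_i=\tau_{i+1}$. Conversely, one does not need to argue minimality of the broad path separately; existence of some broad path of length $n$ is all that is claimed, and the lower bound on its length (which would be $n$) is not part of the statement. Hence there is no real obstacle — the main content is simply that every simplex of $L$ is contained in a maximal face and that intersection is monotone with respect to inclusion.
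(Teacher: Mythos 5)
Your proposal is correct and follows essentially the same route as the paper: take a minimal path realizing the distance and extend each simplex to a maximal face containing it. You are in fact slightly more careful than the paper's own proof, which omits the explicit check that the thickened sequence still has nonempty consecutive intersections.
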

    \begin{proof}
  Since $d(\sigma,\sigma')=n$, there exists a path $s=\tau_1\tau_2\ldots\tau_n$ in $L$ such that $\sigma=\tau_1$ and $\sigma'=\tau_n$.   For each $1\leq i\leq n$, we extend $\tau_i$ to be a maximal face $\tau'_i$ of $L$. Then we obtain the broad path $s'=\tau'_1\ldots\tau'_n$.
    \end{proof}

 Let ${\rm Int}(H)=\delta\gamma\Delta\gamma(H)$. Then  with the help of (\ref{e-int-0}),
  \begin{eqnarray}
 {\rm Int}(H)&=&\delta\{\tau\in L\mid {\rm~ for~ any~ }\tau\subseteq\sigma,~ \sigma\in H\}\nonumber\\
 &=&\{\tau\in L\mid {\rm ~for ~any~ }\tau'\subseteq\tau {\rm ~ and ~ any~ }\tau'\subseteq\sigma,~ \sigma\in H\}\nonumber\\
 &=&\{\tau\in L\mid {\rm~ for~ any~ }\sigma {\rm  ~with~ }\sigma\cap\tau=\tau'\neq\emptyset,~ \sigma\in H\}.
 \label{e-int-8}
 \end{eqnarray}
  Hence ${\rm Int}(H)$ is the sub-complex of $L$ consisting of all the hyperedges $\tau \in H$ such that for any $\sigma\in \gamma H$, $\sigma\cap \tau$ is empty.    We call ${\rm Int}(H)$ the {\bf interior} of $H$. It follows from (\ref{e-int-8}) that
   \begin{eqnarray}
  {\rm Int}(H)
   &=&\{\tau\in L\mid {\rm ~for~ any~ }\sigma\in\gamma H,~\tau\cap\sigma=\emptyset\}\nonumber\\
   &=&\gamma\{\tau\in L\mid {\rm ~there~ exists~ }\sigma\in
   \gamma H {\rm  ~such ~that~ }\tau\cap\sigma\neq \emptyset\}.\label{eq-int-1}
   \end{eqnarray}
   For any $k\geq 1$, by an induction on $k$ and (\ref{eq-int-1}),
    \begin{eqnarray}
  {\rm Int}^k(H)
   &=&\gamma\{\tau_k\in L\mid {\rm~ there~ exists~ }\sigma\in
   \gamma H {\rm ~ and~ }\tau_1,\tau_2, \ldots, \tau_{k-1}\in L\nonumber \\
   &&{\rm  ~such ~that~ }\tau_1\cap\sigma\neq \emptyset {\rm ~ and~ } \tau_i\cap\tau_{i-1}\neq\emptyset{\rm ~ for~ any~ }2\leq i\leq k\}.\label{e-int-2}
   \end{eqnarray}

  \begin{lemma}\label{l10}
Let $\sigma,\sigma'$ be simplices of  $L$ with $d(\sigma,\sigma')=n$. If $s=\sigma_1,\ldots,\sigma_n$ is a path in $L$  with $\sigma_1=\sigma$, $\sigma_n=\sigma'$ and $|s|=n$, then for any $1\leq i<j\leq n$, $d(\sigma_i,\sigma_j)=j-i$.
\end{lemma}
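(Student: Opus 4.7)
The plan is to use the standard ``every subpath of a shortest path is itself shortest'' argument from graph-theoretic metric theory, implemented here by a splicing contradiction. The upper bound is immediate: the restriction $\sigma_i\sigma_{i+1}\ldots\sigma_j$ of $s$ is itself a path in $L$, because the nonemptiness of each consecutive-pair intersection is inherited from the original path $s$. Counting the simplices in this restricted path gives the upper bound on $d(\sigma_i,\sigma_j)$ matching the claimed value.

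For the matching lower bound I would argue by contradiction. Suppose $d(\sigma_i,\sigma_j)$ were strictly smaller than the length of the restriction $\sigma_i\ldots\sigma_j$ of $s$, and let $s'=\tau_1\tau_2\ldots\tau_k$ be a path in $L$ realizing this smaller distance, with $\tau_1=\sigma_i$ and $\tau_k=\sigma_j$ and $k$ strictly less than the length of $\sigma_i\ldots\sigma_j$. I would splice $s'$ into $s$ in place of the segment from $\sigma_i$ to $\sigma_j$, producing the new sequence
\[
\sigma_1\sigma_2\ldots\sigma_{i-1}\,\tau_1\tau_2\ldots\tau_k\,\sigma_{j+1}\ldots\sigma_n.
\]
Because $\tau_1=\sigma_i$ and $\tau_k=\sigma_j$ glue smoothly onto the surviving prefix and suffix of $s$, and the interior consecutive intersections are nonempty by hypothesis on $s$ and $s'$, this new sequence is again a valid path in $L$ from $\sigma$ to $\sigma'$. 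A direct length count then shows it has strictly fewer simplices than $n$, contradicting the minimality $d(\sigma,\sigma')=n$. Combining the upper and lower bounds yields $d(\sigma_i,\sigma_j)=j-i$.

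The main obstacle is really only bookkeeping. One must be careful that the spliced sequence has the correct length (an arithmetic check reducing the original $n$ by exactly the amount saved when passing from the subpath of $s$ to the shorter witness $s'$), and that the boundary cases $i=1$ or $j=n$, in which the prefix $\sigma_1\ldots\sigma_{i-1}$ or the suffix $\sigma_{j+1}\ldots\sigma_n$ is empty, are handled consistently. In those degenerate cases one of the three concatenated pieces simply disappears and the argument goes through verbatim, so no separate treatment is actually needed.
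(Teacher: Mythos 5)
Your proof is correct and is essentially the paper's own argument: the splicing contradiction showing that a subpath of a length-minimal path is itself length-minimal, with the subpath providing the matching upper bound. (One caveat, not a defect of your reasoning: under the paper's convention that the path $\sigma_i\ldots\sigma_j$ has length $j-i+1$, the subpath count actually yields $d(\sigma_i,\sigma_j)=j-i+1$ rather than the stated $j-i$ --- an off-by-one already present in the lemma's statement and in the paper's proof, so your phrase ``matching the claimed value'' silently inherits it.)
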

\begin{proof}
 Suppose to the contrary, there exists $1\leq i<j\leq n$ such that $d(\sigma_i,\sigma_j)<j-i$. Let $s_{i,j}$ be the path $\sigma_i\ldots\sigma_j$ as a subset of $s$. Then $|s_{i,j}|=j-i+1$. And we can find a path $s_{i,j}'$ starting from $\sigma_i$ and ending at $\sigma_j$ with $|s'_{i,j}|<j-i+1$. Replacing $s_{i,j}$ with $s_{i,j}'$, we obtain a new path $s'$ starting from $\sigma$ and ending at $\sigma'$ with $|s'|<|s|$. This contradicts that $s$ is the path starting from $\sigma$ and ending at $\sigma'$ with the  minimal length.
\end{proof}

        \begin{lemma}\label{l-e-i}
  Let $H\in \mathcal{H}(L)$.
  \begin{enumerate}[(a). ]
  \item
  Suppose $H\neq \emptyset$. Then for any $k\geq 1$,
$ \max(L)\cap {\rm Ext}^k(H)$ is the union of all the broad paths $s=\tau_1\ldots\tau_k$ in $L$    such that  there exists $\sigma\in H$ with $\sigma\subseteq \tau_1$.
\item
Suppose $H\neq L$. Then for any $k\geq 1$,
$  \gamma{\rm Int}^k(H)$ is the union of all the paths $s'=\tau'_1\ldots\tau'_k$ in $L$    such that there exists $\sigma'\in\gamma H$ with $\tau'_1\cap \sigma' \neq\emptyset$.
  \end{enumerate}
  \end{lemma}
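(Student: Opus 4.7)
The plan is to derive both parts directly from the explicit expressions (\ref{e-ext-2}) and (\ref{e-int-2}) for $\text{Ext}^k(H)$ and $\text{Int}^k(H)$ already established in this subsection, together with a short oscillation argument that realises any simplex appearing in a valid path as the endpoint of a path of the same length.

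For part (a), define
\[
S_k=\{\tau_k\in\max(L)\mid \exists\,\sigma\in H,\ \exists\,\tau_1,\ldots,\tau_{k-1}\in\max(L),\ \sigma\subseteq\tau_1,\ \tau_i\cap\tau_{i-1}\neq\emptyset\ \text{for } 2\le i\le k\}.
\]
Equation (\ref{e-ext-2}) then reads $\text{Ext}^k(H)=\Delta S_k$. Since every element of $S_k$ is a maximal face of $L$, it remains maximal inside the sub-complex $\Delta S_k$; conversely, any maximal face of $L$ lying in $\Delta S_k$ must itself be one of the generators in $S_k$. Hence $\max(L)\cap\text{Ext}^k(H)=S_k$, which is exactly the collection of terminal simplices of broad paths of length $k$ starting from a maximal face containing some $\sigma\in H$. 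To upgrade from ``terminal simplices'' to ``union of all such paths'', the inclusion $S_k\subseteq \bigcup_s\{\tau_1,\ldots,\tau_k\}$ is immediate. For the reverse inclusion, given any valid broad path $\tau_1\ldots\tau_k$ and any index $1\le j\le k$, I would note that the constant extension $\tau_1\tau_2\ldots\tau_j\tau_j\ldots\tau_j$ (with $\tau_j$ repeated $k-j$ additional times) is still a broad path, since $\tau_j\cap\tau_j=\tau_j\neq\emptyset$ and $\tau_j\in\max(L)$; it has length $k$ and ends at $\tau_j$, witnessing $\tau_j\in S_k$. The hypothesis $H\neq\emptyset$ only guarantees that the set of admissible starting simplices is nonempty.

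Part (b) follows by the same strategy using (\ref{e-int-2}). That equation gives directly
\[
\gamma\text{Int}^k(H)=\{\tau_k\in L\mid \exists\,\sigma'\in\gamma H,\ \exists\,\tau_1,\ldots,\tau_{k-1}\in L,\ \tau_1\cap\sigma'\neq\emptyset,\ \tau_i\cap\tau_{i-1}\neq\emptyset\ \text{for } 2\le i\le k\},
\]
so $\gamma\text{Int}^k(H)$ is the set of endpoints of the admissible (not necessarily broad) paths of length $k$ whose first simplex meets some $\sigma'\in\gamma H$. The same constant-extension trick promotes each intermediate simplex of such a path to an endpoint, yielding equality with the union of all admissible paths. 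The assumption $H\neq L$ merely ensures $\gamma H\neq\emptyset$ so that admissible paths exist at all.

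No step looks technically difficult; the only mild subtlety is the path-extension step, which I sidestep by repeating the target simplex $\tau_j$ rather than oscillating between $\tau_j$ and $\tau_{j-1}$, so that the parity of $k-j$ never enters the argument and the $j=1$ case needs no separate treatment.
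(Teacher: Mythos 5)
Your proposal is correct and follows essentially the same route as the paper, which proves both assertions by citing the displayed formulas (\ref{e-ext-2}) and (\ref{e-int-2}). Your constant-repetition argument (padding a path with copies of $\tau_j$ to realise every intermediate simplex as an endpoint of a length-$k$ path) supplies a detail the paper leaves implicit in passing from ``set of terminal simplices'' to ``union of all such paths,'' and it is valid since $\tau_j\cap\tau_j\neq\emptyset$ and maximality of $\tau_j$ is preserved.
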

  \begin{proof}
Assertion  (a) follows from  (\ref{e-ext-2}).
   Assertion  (b) follows from  (\ref{e-int-2}).
  \end{proof}

In the next proposition, we list some properties of the powers of Ext and Int.

\begin{proposition}\label{th108}
Let $H\in\mathcal{H}(L)$, $k$ be a nonnegative integer and $n$ be the diameter of $L$.
  Let  $m=\max _{\sigma,\sigma'\in \max(L)}  d(\sigma,\sigma')$.
\begin{enumerate}[(a).]
\item
 If $H\neq\emptyset$ and $k\geq n$, then
${\rm Ext}^{k}(H)=L$.
\item
 If $H\neq L$ and $k\geq n$,  then ${\rm Int}^k(H)=\emptyset$.
\item
There exists  $H$  such that
$
H\subsetneq {\rm Ext}(H)\subsetneq {\rm Ext}^{2}(H)\subsetneq\ldots\subsetneq {\rm Ext}^{m-1}(H)\subsetneq L.
$
\item
There exists  $H$  such that
$
H\supsetneq {\rm Int}(H)\supsetneq {\rm Int}^{2}(H)\supsetneq\ldots\supsetneq {\rm Int}^{m-1}(H)\neq\emptyset.
$
\item
Let $k\geq 1$. Then   ${\rm Ext}^{k-1}(\gamma H)\subseteq \gamma {\rm Int}^k(H)\subseteq {\rm Ext}^{k+1}(\gamma H).$
\end{enumerate}
\end{proposition}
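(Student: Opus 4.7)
The plan is to prove each part via the path characterizations of $\text{Ext}^k$ and $\text{Int}^k$ from Lemma~\ref{l-e-i}, combined with the observation that extending each simplex of a path in $L$ to a containing maximal face preserves consecutive non-empty intersections (and therefore converts an arbitrary path of length $\ell$ into a broad path of length $\ell$).

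For part (a), given $H\neq\emptyset$, I fix $\sigma\in H$ and an arbitrary $\tau\in L$, then choose maximal faces $\tau_1^{\ast}\supseteq\sigma$ and $\tau^{\ast}\supseteq\tau$. Since $d(\tau_1^{\ast},\tau^{\ast})\leq n$, a shortest path from $\tau_1^{\ast}$ to $\tau^{\ast}$ has length at most $n$, and extending each of its simplices to a maximal face produces a broad path of the same length. Padding by repetition to length exactly $k\geq n$ and invoking Lemma~\ref{l-e-i}(a) puts $\tau^{\ast}\in \max(L)\cap\text{Ext}^k(H)$, whence $\tau\in\text{Ext}^k(H)$. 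Part (b) is dual: for $H\neq L$ and any $\sigma\in\gamma H$, any $\tau\in L$ admits a path of length $\leq n$ from $\sigma$ to $\tau$; padding to length $k$ and applying Lemma~\ref{l-e-i}(b) places $\tau$ in $\gamma\text{Int}^k(H)$, forcing $\text{Int}^k(H)=\emptyset$.

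For parts (c) and (d), I construct explicit examples. The natural attempt is to take $L$ to be a ``linear'' simplicial complex realizing the diameter $n$, together with $H$ equal to a single simplex sitting at one end of $L$ (for (c)) or its complement in $L$ (for (d)). Successive applications of $\text{Ext}$ then add exactly one layer of simplices per iteration, and dually $\text{Int}$ peels off exactly one layer per iteration, so the resulting chains remain strictly increasing (respectively decreasing) through all $n$ stages.

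Finally, for (e) I unpack both inclusions using the path characterizations. For $\text{Ext}^{k-1}(\gamma H)\subseteq \gamma\text{Int}^k(H)$, given $\tau\in\text{Ext}^{k-1}(\gamma H)$, Lemma~\ref{l-e-i}(a) supplies a broad path $\tau_1^{\ast},\ldots,\tau_{k-1}^{\ast}$ and an element $\sigma\in\gamma H$ with $\sigma\subseteq\tau_1^{\ast}$ and $\tau\subseteq\tau_{k-1}^{\ast}$; then $\tau_1^{\ast},\ldots,\tau_{k-1}^{\ast},\tau$ is a general path of length $k$ with $\tau\cap\tau_{k-1}^{\ast}\supseteq\tau\neq\emptyset$, and Lemma~\ref{l-e-i}(b) yields $\tau\in\gamma\text{Int}^k(H)$. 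For $\gamma\text{Int}^k(H)\subseteq \text{Ext}^{k+1}(\gamma H)$, given $\tau\in\gamma\text{Int}^k(H)$ witnessed by some $\sigma\in\gamma H$ and a path $\tau_1,\ldots,\tau_k=\tau$ with $\tau_1\cap\sigma\neq\emptyset$, I extend each of $\sigma,\tau_1,\ldots,\tau_k$ to a containing maximal face $\sigma^{\ast},\tau_1^{\ast},\ldots,\tau_k^{\ast}$; all intersections persist under this enlargement, so the resulting sequence is a broad path of length $k+1$, and Lemma~\ref{l-e-i}(a) places $\tau\subseteq\tau_k^{\ast}$ in $\text{Ext}^{k+1}(\gamma H)$. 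The main obstacle is parts (c) and (d): one must engineer $L$ and $H$ so that the chain of strict inclusions lasts exactly $n=\text{diam}\,L$ steps without collapsing early, which requires aligning the global diameter of $L$ with the per-step propagation speed of the operators.
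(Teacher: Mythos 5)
Your arguments for (a), (b) and (e) are essentially the paper's: the path characterizations of Lemma~\ref{l-e-i}, padding a shortest path by repetition to reach length exactly $k$, and enlarging each simplex of a path to a containing maximal face to produce a broad path of the same length. Those parts are fine.

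The gap is in (c) and (d), and it is twofold. First, you have the quantifiers backwards: the complex $L$ is given and arbitrary, and the claim is that \emph{within that} $L$ there exists a sub-hypergraph $H$ realizing the full chain of strict inclusions. You propose instead to ``take $L$ to be a linear simplicial complex realizing the diameter $n$,'' which proves the statement only for a hand-picked $L$, not for the $L$ fixed at the outset. Second, even granting a suitable configuration, you assert without proof that each application of $\text{Ext}$ ``adds exactly one layer'' and in particular does not reach the far end early; you yourself flag this as ``the main obstacle'' and leave it unresolved. The missing idea is the one the paper supplies: since $n=\text{diam}\,L$, there exist maximal faces $\sigma,\sigma'$ with $d(\sigma,\sigma')=n$ and a geodesic broad path $\sigma_1\ldots\sigma_n$ joining them; taking $H=\{\sigma_1\}$, Lemma~\ref{l10} (every subpath of a geodesic is a geodesic, so $d(\sigma_1,\sigma_j)=j-1+1$ in the paper's length convention) shows that $\sigma_j$ cannot lie in $\max(L)\cap\text{Ext}^i(H)$ for $j\geq i+2$ --- otherwise Lemma~\ref{l-e-i}(a) would produce a path from $\sigma_1$ to $\sigma_j$ shorter than the distance --- while $\sigma_{i+1}\in\text{Ext}^i(H)$ by truncating the geodesic. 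This is exactly what forces each inclusion $\text{Ext}^i(H)\subsetneq\text{Ext}^{i+1}(H)$ to be strict for all $i\leq n-1$; part (d) is the dual argument with $H=\gamma\{\sigma_1\}$ and Lemma~\ref{l-e-i}(b). Without an appeal to the geodesic property, your construction could in principle collapse early, so as written (c) and (d) are not proved.
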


      \begin{proof}
 Let $k\geq n$. Then for any simplices $\sigma_1,\sigma_2\in L$, there exists a path $s$ of length $n$ starting from $\sigma_1$ and ending at $\sigma_2$.

 (a). Suppose $H\neq\emptyset$.  Then by Lemma~\ref{l-e-i}~(a), any $\sigma\in\max(L)$ is in ${\rm Ext}^k(H)$. Thus $\max(L)\cap {\rm Ext}^k(H)=L$. Thus ${\rm Ext}^k(H)=L$.

 (b). Suppose $H\neq L$.   Then by Lemma~\ref{l-e-i}~(b), any simplex $\sigma\in L$ is in $\gamma{\rm Int}^k(H)$. Thus ${\rm Int}^k(H)=\emptyset$.

 Choose $\sigma,\sigma'\in \max(L)$ such that $d(\sigma,\sigma')=m$. Choose a broad path $s=\sigma_1\ldots\sigma_m$ in $L$ with $\sigma_1=\sigma$ and $\sigma_m=\sigma'$.

(c). Let $H=\{\sigma\}$. Then by  Lemma~\ref{l10} and Lemma~\ref{l-e-i}~(a),  for any $0\leq i\leq   m-1$, $\sigma_j\in \max(L)\cap{\rm Ext}^i(H)$ for any $j\leq i+1$, and $\sigma_j\notin \max(L)\cap{\rm Ext}^i(H)$ for any $j\geq i+2$. Hence for any $0\leq i\leq m-1$, ${\rm Ext}^i(H)\subsetneq {\rm Ext}^{i+1}(H)$.

(d).
Let $H=\gamma\{\sigma\}$. Then  by  Lemma~\ref{l10} and Lemma~\ref{l-e-i}~(b),    for any $0\leq i\leq  m-1$, $\sigma_j\in {\rm Int}^i(H)$ for any $j\geq i+2$, and $\sigma_j\notin  {\rm Int}^i(H)$ for any $j\leq i+1$. Hence for any $0\leq i\leq  m-1$, ${\rm Int}^i(H)\supsetneq {\rm Int}^{i+1}(H)$.

 (e).
Let $\tau_{k-1}\in{\rm Ext}^{k-1}(\gamma H)$.  Then there exists $\sigma_{k-1}\in\max(L)\cap {\rm Ext}^{k-1}(\gamma H)$ such that $\tau_{k-1}\subseteq \sigma_{k-1}$. By Lemma~\ref{l-e-i}~(a), there exists a broad path $\sigma\ldots\sigma_{k-1}$ of length $k-1$  and $\tau\in \gamma H$ where $\sigma$ is a maximal face of $L$  such that  $\tau\subseteq\sigma$.  Consider the path $s=\sigma\ldots\sigma_{k-1}\tau_{k-1}$ of length $k$ in $L$.   Since $\sigma\cap\tau=\tau\neq\emptyset$, by Lemma~\ref{l-e-i}~(b), $\tau_{k-1}\in \gamma{\rm Int}^k(H)$.  Hence ${\rm Ext}^{k-1}(\gamma H)\subseteq \gamma{\rm Int}^k(H)$.

Let $\tau'_k\in \gamma{\rm Int}^k(H)$. By Lemma~\ref{l-e-i}~(b), there exists a path $s'=\tau'_1 \ldots\tau'_k$ in $L$ and $\sigma'\in \gamma H$ such that $\tau'_1\cap\sigma'\neq\emptyset$. Since $\Delta\sigma'\subseteq{\rm Ext}(\gamma H)$, $\tau'_1\cap\sigma' \in {\rm Ext}(\gamma H)$.  Let $\sigma'_i$ be a maximal face of $L$ such that $\tau'_i\subseteq\sigma'_i$, for each $1\leq i\leq k$. Consider the broad path $s'=\sigma'_1\ldots\sigma'_k$ of length $k$. Then since $\tau'_1\cap\sigma'\subseteq \sigma'_1$,  by Lemma~\ref{l-e-i}~(a),  $\sigma'_k\in\max(L)\cap{\rm Ext}^{k+1}(\gamma H)$. Hence $\tau'_k\in {\rm Ext}^{k+1}(\gamma H)$. Hence $\gamma{\rm Int}^k(H)\subseteq {\rm Ext}^{k+1}(\gamma H)$.
\end{proof}

  The next corollary follows from Proposition~\ref{th108}~(e).

\begin{corollary}\label{cr18}
Let $r$ be the smallest integer such that ${\rm Ext}^r(\gamma H)=L$. Let $t$ be the smallest integer  such that ${\rm Int}^t(H)=\emptyset$. Then   $t=r-1$, $r$ or $r+1$.
\end{corollary}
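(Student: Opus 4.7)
The plan is to deduce the corollary directly from the chain of inclusions in Proposition~\ref{th108}(e), which reads
\begin{eqnarray*}
\text{Ext}^{k-1}(\gamma H)\subseteq \gamma\text{Int}^k(H)\subseteq \text{Ext}^{k+1}(\gamma H)
\end{eqnarray*}
for every $k\geq 1$. The observation that drives everything is the translation ``$\text{Int}^k(H)=\emptyset$ if and only if $\gamma\text{Int}^k(H)=L$,'' which turns the definition of $t$ into a statement about sub-complexes filling up $L$, placing $t$ and $r$ on the same footing.

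First I would note that both $r$ and $t$ are well defined (and finite) under the hypothesis $\emptyset\neq H\neq L$: Proposition~\ref{th108}(a) applied to the nonempty hypergraph $\gamma H$ bounds $r$ by $\text{diam}\,L$, and Proposition~\ref{th108}(b) bounds $t$ by $\text{diam}\,L$. I would also record the monotonicity remark that $\text{Ext}(L)=L$ and $\text{Int}(\emptyset)=\emptyset$, so once the saturating value is reached it persists; this justifies talking about the smallest such exponent.

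Next I would derive the two inequalities $r\leq t+1$ and $t\leq r+1$. For the first, I set $k=t$ in the chain: the equality $\gamma\text{Int}^t(H)=L$ forces $\text{Ext}^{t+1}(\gamma H)\supseteq L$, hence $\text{Ext}^{t+1}(\gamma H)=L$, so $r\leq t+1$ by minimality of $r$. For the second, I set $k=r+1$ in the chain: the left-hand inclusion becomes $\text{Ext}^r(\gamma H)\subseteq\gamma\text{Int}^{r+1}(H)$, and since the left side equals $L$, so does the right side; equivalently $\text{Int}^{r+1}(H)=\emptyset$, giving $t\leq r+1$ by minimality of $t$.

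Combining, $|t-r|\leq 1$, which is exactly the statement $t\in\{r-1,r,r+1\}$. I do not foresee a real obstacle here since Proposition~\ref{th108}(e) has already done the geometric work; the only subtlety worth flagging is making sure both $r$ and $t$ are actually defined, which is why the bracketing of the nontrivial case $\emptyset\neq H\neq L$ should be stated explicitly at the beginning.
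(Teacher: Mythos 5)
Your proof is correct and follows essentially the same route as the paper: both directions come from instantiating the two halves of the chain in Proposition~\ref{th108}(e) and invoking minimality of $r$ and $t$. Your instantiation ($k=t$ and $k=r+1$) is in fact slightly cleaner than the paper's ($k=r-2$ and $k=r+1$), since it avoids the degenerate case $r-2<1$ where the proposition's hypothesis $k\geq 1$ would fail, and your explicit remark that $\emptyset\neq H\neq L$ is needed for $r$ and $t$ to be well defined is a point the paper leaves implicit.
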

\begin{proof}
By Theorem~\ref{th10}~(e), for any $k\geq 1$, ${\rm Ext}^{k+1}(\gamma H)\neq L$ implies ${\rm Int}^k(H)\neq \emptyset$, and ${\rm Ext}^{k-1}(\gamma H)=L$ implies ${\rm Int}^k(H)=\emptyset$. Let $k=r-2$ and $k=r+1$ respectively, we obtain ${\rm Int}^{r-2}(H)\neq \emptyset$ and ${\rm Int}^{r+1}(H)=\emptyset$. Thus $t=r-1$, $r$ or $r+1$.
\end{proof}

The next examples show that all three cases $t=r-1$, $r$ and $r+1$ in Corollary~\ref{cr18} could happen. Hence the power-estimation in the inequality Proposition~\ref{th108}~(e) is tight.

    \begin{figure}[!htbp]
 \begin{center}
\begin{tikzpicture}
\coordinate [label=right:$v_{0,0}$]    (v_{0,0}) at (0,0);
 \coordinate [label=right:$v_{1,0}$]   (v_{1,0}) at (1,0);
 \coordinate  [label=right:$v_{2,0}$]   (v_{2,0}) at (2,0);
\coordinate  [label=right:$v_{3,0}$]   (v_{3,0}) at (3,0);
\coordinate  [label=right:$v_{4,0}$]   (v_{4,0}) at (4,0);
\coordinate  [label=right:$v_{5,0}$]   (v_{5,0}) at (5,0);
\coordinate  [label=right:$v_{6,0}$]   (v_{6,0}) at (6,0);
  \draw (v_{0,0}) -- (v_{6,0});
\coordinate [label=right: $v_{0,1}$]    (v_{0,1}) at (0.5,1*0.7);
\coordinate [label=right: $v_{1,1}$]   (v_{1,1}) at (1.5,1*0.7);
\coordinate  [label=right: $v_{2,1}$]   (v_{2,1}) at (2.5,1*0.7);
\coordinate  [label=right: $v_{3,1}$]   (v_{3,1}) at (3.5,1*0.7);
\coordinate  [label=right: $v_{4,1}$]   (v_{4,1}) at (4.5,1*0.7);
\coordinate  [label=right: $v_{5,1}$]   (v_{5,1}) at (5.5,1*0.7);
  \draw (v_{0,1}) -- (v_{5,1});
     \coordinate [label=right:$v_{0,2}$]    (v_{0,2}) at (1,2*0.7);
 \coordinate [label=right:$v_{1,2}$]   (v_{1,2}) at (2,2*0.7);
 \coordinate  [label=right:$v_{2,2}$]   (v_{2,2}) at (3,2*0.7);
\coordinate  [label=right:$v_{3,2}$]   (v_{3,2}) at (4,2*0.7);
\coordinate  [label=right:$v_{4,2}$]   (v_{4,2}) at (5,2*0.7);
  \draw (v_{0,2}) -- (v_{4,2});
       \coordinate [label=right:$v_{0,3}$]    (v_{0,3}) at (1.5,3*0.7);
 \coordinate [label=right:$v_{1,3}$]   (v_{1,3}) at (2.5,3*0.7);
 \coordinate  [label=right:$v_{2,3}$]   (v_{2,3}) at (3.5,3*0.7);
\coordinate  [label=right:$v_{3,3}$]   (v_{3,3}) at (4.5,3*0.7);
  \draw (v_{0,3}) -- (v_{3,3});
         \coordinate [label=right:$v_{0,4}$]    (v_{0,4}) at (2,4*0.7);
 \coordinate [label=right:$v_{1,4}$]   (v_{1,4}) at (3,4*0.7);
 \coordinate  [label=right:$v_{2,4}$]   (v_{2,4}) at (4,4*0.7);
  \draw (v_{0,4}) -- (v_{2,4});
           \coordinate [label=right:$v_{0,5}$]    (v_{0,5}) at (2.5,5*0.7);
 \coordinate [label=right:$v_{1,5}$]   (v_{1,5}) at (3.5,5*0.7);
  \draw (v_{0,5}) -- (v_{1,5});
             \coordinate [label=right:$v_{0,6}$]    (v_{0,6}) at (3,6*0.7);
               \draw (v_{0,0}) -- (v_{0,6});
               \draw (v_{1,0}) -- (v_{1,5});
               \draw (v_{2,0}) -- (v_{2,4});
               \draw (v_{3,0}) -- (v_{3,3});
               \draw (v_{4,0}) -- (v_{4,2});
               \draw (v_{5,0}) -- (v_{5,1});
               \draw (v_{1,0}) -- (v_{0,1});
               \draw (v_{2,0}) -- (v_{0,2});
               \draw (v_{3,0}) -- (v_{0,3});
               \draw (v_{4,0}) -- (v_{0,4});
               \draw (v_{5,0}) -- (v_{0,5});
               \draw (v_{6,0}) -- (v_{0,6});
               \fill [fill opacity=0.28][gray!100!white] (v_{0,0}) -- (v_{0,6}) -- (v_{6,0}) -- cycle;
     \end{tikzpicture}
\end{center}
 \caption{The $2$-complex $L$}
\end{figure}
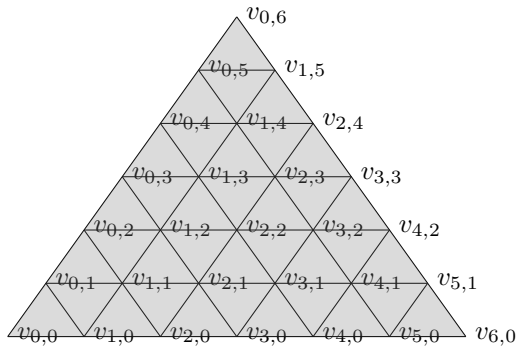

 \begin{example}
 Let $L$ be the $2$-complex with vertices $v_{i,j}$, $0\leq i,j\leq 6$, $i+j\leq 6$, given  in Figure~1.
 \begin{enumerate}
 \item
 Let $H$ be the $2$-dimensional sub-complex consisting of all the simplices inside the triangle $[v_{1,2}, v_{3,2},v_{1,4}]$, {\bf including} the boundary of $[v_{1,2}, v_{3,2},v_{1,4}]$. Then  $t=1$, $r=2$.

  \item
 Let $H$ be the $2$-dimensional sub-complex consisting of all the simplices inside the triangle $[v_{1,1}, v_{4,1},v_{1,4}]$, {\bf including} the boundary of $[v_{1,1}, v_{4,1},v_{1,4}]$. Then $t=2$, $r=2$.

  \item
  Let $H$ be the sub-hypergraph consisting of all the hyperedges inside  the triangle $[v_{1,1}, v_{4,1},v_{1,4}]$, {\bf excluding} the boundary of $[v_{1,2}, v_{4,1},v_{1,4}]$.  Then $t=2$, $r=1$.

 \end{enumerate}
 \end{example}

     \subsection{Compositions  of maps and neighborhoods}\label{s3.2}

   Let $v$ be a vertex of $L$. Recall that the {closed star} of $v$ in $L$ is the complex
  \begin{eqnarray*}
  \overline{{\rm St}}(v,L)=\Delta\{\sigma\in L\mid v\in\sigma\}.
  \end{eqnarray*}
 Let $\tau\in L$.  The {neighborhood}  of $\tau$ in $L$ is the complex
      \begin{eqnarray*}
   {\rm Nbd}(\tau)=\cup_{v\in\tau} \overline{{\rm St}}(v,L).
  \end{eqnarray*}
   By a straight-forward calculation,
   \begin{eqnarray*}
   {\rm Nbd}(\tau)&=&\cup_{v\in\tau}\Delta\{\sigma\in L\mid v\in\sigma\}\\
   &=&\Delta\cup_{v\in\tau}\{\sigma\in L\mid v\in\sigma\}\\
    &=&\Delta\{\sigma\in L\mid {\rm ~there~ exists ~}v\in \tau {\rm  ~such~ that~ } v\in\sigma\}\\
   &=&\Delta\{\sigma\in L\mid \tau\cap\sigma\neq\emptyset\}.
  \end{eqnarray*}
 Let $H\in\mathcal{H}(L)$.  The neighborhood of $H$ in $L$ is the complex
   ${\rm Nbd}(H)=\cup_{\tau\in H}{\rm Nbd}(\tau)$.
The maximal sub-hypergraph in $L$ whose neighborhood is contained in $H$ is
${\rm Nbd}^{-1}(H)=\cup_{{\rm Nbd}(H') \subseteq H} H'$.

    \begin{proposition}\label{pr-tub-1}
Let $H\in \mathcal{H}(L)$. Then
\begin{enumerate}[(a).]
\item
 ${\rm Nbd}\circ{\rm Nbd}^{-1}(H)\subseteq \delta H$;
 \item
 $\Delta H\subseteq {\rm Nbd}^{-1}\circ{\rm Nbd}(H)$;
 \item
  ${\rm Nbd}^{-1}(H)\subseteq {\rm Int}(H)$  and  the equality holds  if  for any $\sigma'\in\gamma H$, there exists $\sigma\in \gamma H$ such that $\sigma$  is maximal in $L$  and  $\sigma'\subseteq \sigma$;
 \item
 ${\rm Ext}(H)\subseteq {\rm Nbd}(H)$  and the equality holds if each vertex of $H$ is a hyperedge.
 \end{enumerate}
\end{proposition}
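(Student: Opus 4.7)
The plan is to unpack each of the four assertions using the explicit descriptions of $\Delta$, $\delta$, $\text{Ext}$, $\text{Int}$, and $\text{Nbd}$ already derived in the paper. The common toolkit is the formula $\text{Nbd}(\tau) = \Delta\{\sigma \in L \mid \tau \cap \sigma \neq \emptyset\}$ (which immediately yields monotonicity in the argument and distributivity over unions of hypergraphs), the explicit form of $\text{Ext}(H)$ at the start of Subsection~\ref{s3.1}, the characterization of $\text{Int}(H)$ via (\ref{e-int-8}), and the maximality of $\delta H$ as a sub-complex of $H$.

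For (a), I distribute $\text{Nbd}$ over the defining union $\text{Nbd}^{-1}(H) = \bigcup_{\text{Nbd}(H') \subseteq H} H'$ to obtain $\text{Nbd} \circ \text{Nbd}^{-1}(H) = \bigcup \text{Nbd}(H') \subseteq H$. The left side is a union of simplicial complexes, hence itself a complex contained in $H$, and so contained in $\delta H$ by the maximality of $\delta H$ among sub-complexes of $H$. For (b), given $\tau \in \Delta H$ I fix $\sigma \in H$ with $\tau \subseteq \sigma$. Monotonicity of $\text{Nbd}$ (since every vertex of $\tau$ is a vertex of $\sigma$) gives $\text{Nbd}(\{\tau\}) \subseteq \text{Nbd}(\sigma) \subseteq \text{Nbd}(H)$, which places $\{\tau\}$ among the sub-hypergraphs comprising $\text{Nbd}^{-1}(\text{Nbd}(H))$; since this holds for every $\tau \in \Delta H$, the claimed inclusion follows.

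For (c), I unfold both sides into quantifier statements. The condition $\text{Nbd}(\{\tau\}) \subseteq H$ says that every face of every $\sigma \in L$ sharing a vertex with $\tau$ lies in $H$. The condition $\tau \in \text{Int}(H)$, as unpacked in (\ref{e-int-8}), says that for every $\tau' \subseteq \tau$ and every $\mu \supseteq \tau'$ in $L$, $\mu \in H$. I would match these by observing that the outer $\delta$ in $\text{Int} = \delta\gamma\Delta\gamma$ supplies precisely the face-closure built into $\text{Nbd}$ by the $\Delta$-closure in its formula, so that the collections of simplices forced into $H$ by the two conditions coincide, after which the equality of the two sets falls out.

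For (d), the inclusion $\text{Ext}(H) \subseteq \text{Nbd}(H)$ is immediate from the explicit formula for $\text{Ext}$: any witness $\tau \subseteq \tau'$ with some $\sigma \in H$ satisfying $\sigma \subseteq \tau'$ forces $\sigma \cap \tau' = \sigma \neq \emptyset$, so $\tau \in \text{Nbd}(\sigma) \subseteq \text{Nbd}(H)$. For the reverse inclusion under the extra hypothesis, I rewrite $\text{Nbd}(H) = \bigcup_{v \in V_H} \overline{\text{St}}(v, L)$; any $\tau \in \overline{\text{St}}(v, L)$ lies in some $\tau' \in L$ with $v \in \tau'$, and since $\{v\} \in H$ by hypothesis, the pair $(\{v\}, \tau')$ witnesses $\tau \in \text{Ext}(H)$. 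The main obstacle is part (c): matching the two quantifier patterns requires careful bookkeeping of how the $\delta$- and $\Delta$-closures inside $\text{Int}$ interact with the $\Delta$-closure hidden inside $\text{Nbd}$, and one must not confuse the role of subfaces of $\tau$ with that of subfaces of the outer $\sigma$.
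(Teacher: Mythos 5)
Your arguments for (a), (b) and (d) are correct and essentially the same as the paper's: for (a) you use that $\text{Nbd}\circ\text{Nbd}^{-1}(H)$ is a simplicial complex contained in $H$ together with the maximality of $\delta H$; for (b) you reduce to $\text{Nbd}(\{\tau\})\subseteq\text{Nbd}(\sigma)\subseteq\text{Nbd}(H)$ for $\tau\subseteq\sigma\in H$ (the paper packages the same fact as $\text{Nbd}(\Delta H)=\text{Nbd}(H)$); and for (d) you make the same comparison of $\{\sigma\mid\tau\cap\sigma\neq\emptyset\}$ with $\{\sigma\mid\tau\subseteq\sigma\}$, using a shared vertex $v$ and the hypothesis $\{v\}\in H$ for the reverse inclusion.

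Part (c) is where your proposal has a genuine gap, and it is not one that the ``careful bookkeeping'' you defer to can close. Unwinding the two conditions as you suggest, $\text{Nbd}(\{\tau\})\subseteq H$ requires every element of $\Delta\{\sigma\in L\mid\tau\cap\sigma\neq\emptyset\}$ to lie in $H$, whereas $\tau\in\text{Int}(H)$ requires, by (\ref{eq-int-1}), only that every element of $\{\sigma\in L\mid\tau\cap\sigma\neq\emptyset\}$ lie in $H$. These two families do not coincide: the $\Delta$-closure adds faces $\rho\subseteq\sigma$ of a simplex $\sigma$ meeting $\tau$ with $\rho\cap\tau=\emptyset$, and nothing in the $\text{Int}$ condition forces such a $\rho$ into $H$. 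Hence only the inclusion $\text{Nbd}^{-1}(H)\subseteq\text{Int}(H)$ falls out of your matching; the converse needs $\gamma H$ to be closed under passing to cofaces, i.e.\ it needs $H$ to be a simplicial complex. A concrete failure: let $L$ be the full simplex on $\{a,b,c\}$ and $H=\{\{a\},\{a,b\},\{a,c\},\{a,b,c\}\}$. Then every simplex meeting $\{a\}$ lies in $H$, so $\{a\}\in\text{Int}(H)$, but $\text{Nbd}(\{a\})=L\not\subseteq H$ since $\{b\}\notin H$, so $\text{Nbd}^{-1}(H)=\emptyset$. Be aware that the paper's own proof of (c) makes the identical unjustified move (the step replacing ``for any $\sigma'\in\gamma H$ and any $\sigma\supseteq\sigma'$, $\tau\cap\sigma=\emptyset$'' by ``for any $\sigma'\in\gamma H$, $\tau\cap\sigma'=\emptyset$'' is an equivalence only when $\gamma H$ is coface-closed), so the assertion as stated fails for general $H\in\mathcal{H}(L)$ and holds for $H\in\mathcal{K}(L)$. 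Your instinct that (c) was ``the main obstacle'' was right, but the obstacle lies in the claim itself, not merely in the bookkeeping.
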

\begin{proof}
(a). By the definition of neighborhoods, ${\rm Nbd}\circ{\rm Nbd}^{-1}(H)\subseteq  H$. Moreover, ${\rm Nbd}\circ{\rm Nbd}^{-1}(H)$ is a simplicial complex. Hence ${\rm Nbd}\circ{\rm Nbd}^{-1}(H)\subseteq \delta H$.

(b). By the definition of neighborhoods,   any maximal face of   ${\rm Nbd}(H)$ is a maximal face of $L$. Thus ${\rm Nbd}(H)$ is completely determined by $\max(L)\cap{\rm Nbd}(H)$.  In order to prove (b),
we only need to show ${\rm Nbd}(\Delta H)={\rm Nbd}(H)$. Since $H\subseteq \Delta H$,   ${\rm Nbd}(H)\subseteq {\rm Nbd}(\Delta H)$.
Let $\sigma'\in \max (L)\cap{\rm Nbd}(\Delta H)$. Then there exists $\sigma\in \Delta H$ such that $\sigma\cap\sigma'\neq\emptyset$.  Moreover, there exists $\tau\in H$ such that $\sigma\subseteq\tau$. Hence $\tau\cap\sigma'\neq\emptyset$. Hence $\sigma'\in\max(L)\cap{\rm Nbd}(H)$.  Thus $\max(L)\cap {\rm Nbd}(\Delta H)\subseteq \max(L)\cap{\rm Nbd}(H)$.  Thus  $ {\rm Nbd}(\Delta H)\subseteq   {\rm Nbd}(H)$.   Therefore, ${\rm Nbd}(\Delta H)={\rm Nbd}(H)$. Consequently, $\Delta H\subseteq {\rm Nbd}^{-1}\circ{\rm Nbd}(H)$.

(c). By a straight-forward calculation,
\begin{eqnarray}
{\rm Nbd}^{-1}(H)
&=&\{\tau\in L\mid {\rm Nbd}(\tau)\subseteq H\}\nonumber\\
&=&\{\tau\in L\mid {\rm~  for~ any ~}\sigma'\in\gamma H,~ \sigma'\notin\Delta\{\sigma\mid \tau\cap\sigma\neq\emptyset\}\}\nonumber\\
&=&\{\tau\in L\mid {\rm ~ for~ any~ }\sigma'\in\gamma H{\rm ~ and~ any ~ }\sigma\in L \nonumber\\
&&{\rm ~ with~ }\tau\cap\sigma\neq\emptyset, ~ \sigma'\notin\Delta\sigma\}\nonumber\\
&=&\{\tau\in L\mid {\rm~  for~ any~ }\sigma'\in\gamma H{\rm ~ and ~any ~ }\sigma'\subseteq \sigma,   ~ \tau\cap\sigma=\emptyset\}\nonumber\\
&\subseteq &\{\tau\in L\mid {\rm ~ for~ any~ }\sigma'\in\gamma H,~\tau\cap\sigma'=\emptyset\}\nonumber\\
&=&{\rm Int} (H).  \label{e-tub9}
\end{eqnarray}
Suppose  in addition that   for any $\sigma'\in\gamma H$, there exists $\sigma\in \gamma H$ such that $\sigma$  is maximal in $L$ and   $\sigma'\subseteq\sigma$.  Then the  equality holds in the  penultimate inequality of (\ref{e-tub9}).   Thus  ${\rm Nbd}^{-1}(H)= {\rm Int}(H)$.

(d). By a straight-forward calculation,
\begin{eqnarray}
{\rm Nbd}(H)&=& \cup_{\tau\in H}\Delta\{\sigma\in L\mid \tau\cap\sigma\neq\emptyset\}\nonumber\\
&\supseteq& \cup_{\tau\in H}\Delta\{\sigma\in L \mid \tau\subseteq\sigma\}\label{e-tub-8}\\
&=&\cup_{\tau\in H}\Delta\{\sigma\in \max(L)\mid \tau\subseteq\sigma\}\nonumber\\
&=&{\rm Ext}(H). \nonumber
\end{eqnarray}
 Suppose in addition that each vertex of $H$ is a hyperedge. Then
   the equality holds in (\ref{e-tub-8}).  Hence  ${\rm Nbd}(H)={\rm Ext}(H)$.
   \end{proof}

   The next corollary follows from Proposition~\ref{pr-tub-1}.

   \begin{corollary} \label{co-tub-1}
   Let $H\in \mathcal{H}(L)$. Then
   \begin{enumerate}[(a).]
   \item
   If   for any $\sigma'\in\gamma H$, there exists $\sigma\in \gamma H$ such that $\sigma$  is maximal in $L$  and  $\sigma'\subseteq\sigma$,  then  ${\rm Ext}\circ {\rm Int}(H)\subseteq \delta H$;

   \item
   If each vertex of $H$ is a hyperedge, then $\Delta H\subseteq {\rm Int}\circ {\rm Ext}(H)$.

   \end{enumerate}
   \end{corollary}

  \section{Map algebras on random hypergraphs and random simplicial complexes}\label{sec4}

  In this section, we study the  maps $D\Delta$, $D\delta$ and  $D\gamma$   as well as their compositions acting on $D(\mathcal{H}(L))$ and $D(\mathcal{K}(L))$. We prove Theorem~\ref{th10} and Theorem~\ref{th888}.

Let $w\in G$. For any $f\in D(\mathcal{H}(L))$ and any $H\in \mathcal{H}(L)$, it follows from (\ref{e-intro1}) that
\begin{eqnarray}\label{e-4.1-3}
Dw(f)(H)=\sum_{w(H')=H}f(H').
\end{eqnarray}
  Let $t$ be a positive integer and let $W\in G^t$. Suppose $W=(\ldots (w_1*w_2)*\ldots* w_t)\in G^t$  with any $(t-2)$-brackets $(\cdot )$  giving the order of evaluations, $*=\wedge$ or $+$, and $w_1,w_2, \ldots,w_t\in G$.  Then with the help of (\ref{e-intro2}), $W$ induces a map
\begin{eqnarray}\label{e-4.1-2}
DW: D\big(\mathcal{H}(L)\big)^{\times t}\longrightarrow D\big(\mathcal{H}(L)\big)
\end{eqnarray}
given by
\begin{eqnarray}\label{e-4.1-1}
DW(f_1,f_2,\ldots,f_t)(H)
=\sum_{(\ldots(H_1*H_2)*\ldots *H_k)=H}\prod_{i=1}^t\big(Dw_i(f_i)(H_i)\big)
\end{eqnarray}
for any $(f_1,\ldots,f_t)\in D\big(\mathcal{H}(L)\big) ^{\times t}$ and any $H\in\mathcal{H}(L)$.
The next lemma shows that (\ref{e-4.1-1}) gives a well-defined map (\ref{e-4.1-2}).

\begin{lemma}\label{p12}
For and $t\geq 1$ and any $(f_1,f_2, \ldots,f_t)\in D\big(\mathcal{H}(L)\big) ^{\times t}$, $DW(f_1,f_2,\ldots,f_t)\in D\big(\mathcal{H}(L)\big)$.
\end{lemma}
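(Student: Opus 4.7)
The plan is to verify that $DW(f_1,\ldots,f_t)$ has both defining properties of a probability function on $\mathcal{H}(L)$: pointwise nonnegativity and total mass equal to $1$. Nonnegativity is immediate from formulas (\ref{e-4.1-3}) and (\ref{e-4.1-1}): each $f_i$ takes values in $[0,1]$, hence $Dw_i(f_i)(H_i) \geq 0$ for every $H_i$, and the product appearing in (\ref{e-4.1-1}) is therefore nonnegative. The substance of the lemma lies in establishing $\sum_{H} DW(f_1,\ldots,f_t)(H) = 1$.

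First I would dispose of the single-variable case: for $w \in G$ and $f \in D(\mathcal{H}(L))$, swapping the order of summation in (\ref{e-4.1-3}) gives
\[
\sum_{H \in \mathcal{H}(L)} Dw(f)(H) = \sum_{H \in \mathcal{H}(L)} \sum_{w(H')=H} f(H') = \sum_{H' \in \mathcal{H}(L)} f(H') = 1,
\]
where the second equality uses that $w \colon \mathcal{H}(L) \to \mathcal{H}(L)$ is single-valued, so each $H'$ is counted exactly once. This already establishes the lemma for $t=1$.

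For general $t \geq 2$, the key observation is that the parenthesization $W = (\ldots(w_1 \ast w_2) \ast \ldots \ast w_t)$, with each $\ast \in \{\wedge,+\}$, defines a single-valued map $\mathcal{H}(L)^{\times t} \to \mathcal{H}(L)$ via the hypergraph operations of intersection and union. Combining this observation with the single-variable case and factoring the product of independent sums, I would compute
\[
\sum_{H} DW(f_1,\ldots,f_t)(H) = \sum_{(H_1,\ldots,H_t)} \prod_{i=1}^{t} Dw_i(f_i)(H_i) = \prod_{i=1}^{t} \sum_{H_i} Dw_i(f_i)(H_i) = 1.
\]

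The only step warranting care is the first equality in the last display: the constraint $W(H_1,\ldots,H_t)=H$ combined with the outer sum over $H$ collapses to an unconstrained sum over tuples precisely because $W$ sends each tuple to exactly one element of $\mathcal{H}(L)$. I do not anticipate a serious obstacle beyond this bookkeeping; the factorization $\sum \prod = \prod \sum$ over finite index sets is routine, and the reduction to the single-variable case, together with the observation that $W$ is single-valued on tuples, is the main conceptual content.
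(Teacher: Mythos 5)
Your proposal is correct and follows essentially the same route as the paper: both establish the $t=1$ case by swapping the order of summation using that $w$ is single-valued, and both reduce the general case to an unconstrained sum over tuples of a product of the one-variable pushforwards; the only cosmetic difference is that you factor $\sum\prod=\prod\sum$ in one step where the paper peels off one factor at a time by induction on $t$. Your explicit remark on nonnegativity is a harmless addition the paper leaves implicit.
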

\begin{proof}
To prove Lemma~\ref{p12}, we need to prove
\begin{eqnarray}\label{e-pr1}
\sum_{H\in\mathcal{H}(L)}DW(f_1,f_2,\ldots,f_t)(H)=1
\end{eqnarray}
for any $t\geq 1$.
Firstly, we prove (\ref{e-pr1}) for $t=1$. Since $w$ is a self-map on $\mathcal{H}(L)$, for any $f_1\in D(\mathcal{H}(L))$,
\begin{eqnarray*}
\sum_{H_1\in \mathcal{H}(L)} Dw_1(f_1)(H_1) = \sum_{H_1\in\mathcal{H}(L)}\sum_{w_1(H'_1)=H_1}f_1(H'_1)
 = \sum_{H'_1\in\mathcal{H}(L)}f_1(H'_1).
\end{eqnarray*}
 Thus (\ref{e-pr1}) holds for $t=1$.
Secondly, we use induction on $t$ and prove (\ref{e-pr1}) for $t\geq 2$. By (\ref{e-4.1-3}) and (\ref{e-4.1-1}), we have
\begin{eqnarray*}
&&\sum_{H\in\mathcal{H}(L)}DW(f_1,f_2,\ldots,f_t)(H)\\
&=&\sum_{H_1,H_2,\ldots, H_t\in\mathcal{H}(L)}\prod_{i=1}^t\left(\sum_{w_i(H_i')=H_i}f_i(H_i')\right)\\
&=&\sum_{H_1,H_2,\ldots, H_{t-1}\in\mathcal{H}(L)}\prod_{i=1}^{t-1}\left(\sum_{w_i(H_i')=H_i}f_i(H_i')\right)\left(\sum_{H_t\in\mathcal{H}(L)}\sum_{w_t(H'_t)=H_t}f_t(H'_t)\right)\\
&=&\sum_{H_1,H_2,\ldots, H_{t-1}\in\mathcal{H}(L)}\prod_{i=1}^{t-1}\left(\sum_{w_i(H_i')=H_i}f_i(H_i')\right).
\end{eqnarray*}
By an induction on $t$, (\ref{e-pr1}) follows.
   \end{proof}

   Now we prove Theorem~\ref{th10}.

   \begin{proof}[Proof of Theorem~\ref{th10}]
    Theorem~\ref{th10}~(a), (b), (c), (d) follow from Proposition~\ref{th108}~(a), (b), (c), (d) respectively.  Theorem~\ref{th10}~(e) follows from Proposition~\ref{th108}~(e). Theorem~\ref{th10}~(f) follows from Corollary~\ref{co-tub-1}~(b).
      \end{proof}

Let  $p$ be a function from $L$ to $[0,1]$.

  \begin{lemma}\label{p1}
The map $D\Delta$ sends $\bar{{\rm P}}_{L,p}\in D(\mathcal{H}(L))$ in Definition~\ref{def3}  to  a  random  simplicial complex  $D\Delta (\bar{{\rm P}}_{L,p})  \in  D(\mathcal{K}(L))$   given by
\begin{eqnarray*}
D\Delta(K)= \Big(\prod_{\tau\in {\rm max}(K)} p(\tau)\Big)\Big(\prod_{\tau\notin  K}\big(1-p(\tau)\big)\Big)
\end{eqnarray*}
for any  $K\in \mathcal{K}(L)$.
 \end{lemma}

  \begin{proof}
  Let  $K$ be  a  sub-simplicial complex  of  $L$.  Let ${\rm max}(K)$  be  the collection of all the maximal  faces  in  $K$.    Let $S=\{\sigma_1,\sigma_2,\ldots,\sigma_s\}$  be  any set  of  distinct  simplices in  $K$ such that     $s$  is  a  non-negative  integer  and for  each $\sigma_i$,  $i=1,2,\ldots,s$, there exists $\tau\in {\rm max}(K)$ such that $\sigma_i\subsetneq \tau$.  Here $S$  is allowed to be the emptyset.  Suppose  $S$  runs  over all such sets  of  simplices  in $K$.  Then
  \begin{eqnarray*}
  H={\rm max}(K)\sqcup S
  \end{eqnarray*}
  runs over all the  sub-hypergraphs of $L$  such that  $\Delta H=K$.  Consequently,
  \begin{eqnarray*}
 && {\rm Prob}[\Delta  H=K{\rm~for~the~random~hypergraph~}H\sim \overline{P}_{L,p}(H)]\\
  &=&\sum_{\Delta  H=K}\overline{P}_{L,p}(H)\\
  &=&\sum_{\Delta  H=K~~~}\prod_{\sigma\in H}p(\sigma)\prod_{\sigma\notin H}\big(1-p(\sigma)\big)\\
  &=&\sum_{H={\rm max}(K) \sqcup S~~~}\prod_{\sigma\in H}p(\sigma)\prod_{\sigma\notin H}\big(1-p(\sigma)\big)\\
  &=&\sum_{S\subseteq  K\setminus \max(K)~~}\prod_{\sigma\in\max(K)} p(\sigma) \prod_{\sigma\in S}p(\sigma)\prod_{\sigma\in K\setminus (\max(K)\sqcup S)} \big(1-p(\sigma)\big)\prod_{\sigma\notin K}\big(1-p(\sigma)\big)\\
  &=&\Big(\prod_{\sigma\in\max(K)} p(\sigma)\Big)\Big(\sum_{S\subseteq  K\setminus \max(K)~~}\prod_{\sigma\in S}p(\sigma)\prod_{\sigma\in K\setminus (\max(K)\sqcup S)} \big(1-p(\sigma)\big)\Big)\Big(\prod_{\sigma\notin K}\big(1-p(\sigma)\big)\Big)\\
  &=&\Big(\prod_{\tau\in {\rm max}(K)} p(\tau)\Big)\Big(\prod_{\tau\notin  K}\big(1-p(\tau)\big)\Big).
  \end{eqnarray*}
  Here the last equality follows from that
  \begin{eqnarray*}
  &&\sum_{S\subseteq  K\setminus \max(K)~~}\prod_{\sigma\in S}p(\sigma)\prod_{\sigma\in K\setminus (\max(K)\sqcup S)} \big(1-p(\sigma)\big)\\
  &=&\prod_{\sigma\in K\setminus \max(K)}\Big(p(\sigma)+\big(1-p(\sigma)\big)\Big)=\prod_{\sigma\in K\setminus \max(K)}1
  =1.
  \end{eqnarray*}
We obtain  the  lemma.
\end{proof}

\begin{lemma}\label{p9}
The map $D\delta$ sends $\bar{{\rm P}}_{L,p}$ in Definition~\ref{def3}  to    a  random  simplicial complex  $D\delta (\bar{{\rm P}}_{L,p})  \in  D(\mathcal{K}(L))$   given by
\begin{eqnarray*}
D\delta(K)= \sum_{\delta H=K} \prod_{\sigma\in H} p(\sigma) \prod_{\sigma\notin H} \big(1-p(\sigma)\big)
\end{eqnarray*}
for any  $K\in \mathcal{K}(L)$.
 \end{lemma}

\begin{proof}
Let  $K$   be  a   sub-simplicial  complex  of  $L$.   Then
\begin{eqnarray*}
&&{\rm Prob}[\delta  H=K{\rm~for~the~random~hypergraph~}H\sim \overline{P}_{L,p}(H)]\\
&=&\sum_{\delta H=K} \overline {P}_{L,p}\\
&=&\sum_{\delta H=K} \prod_{\sigma\in H} p(\sigma) \prod_{\sigma\notin H} \big(1-p(\sigma)\big).
\end{eqnarray*}
By the definition of $D\delta$,  the lemma follows.
\end{proof}

  \begin{lemma}\label{p8}
The map $D\gamma$ sends $\bar{{\rm P}}_{L,p}$ to $\bar{{\rm P}}_{L,1-p}$.
 \end{lemma}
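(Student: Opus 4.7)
The plan is to exploit the fact that $\gamma$ is an involution, which collapses the sum defining $D\gamma$ to a single term, and then to verify the identity by a direct symbolic manipulation of the product formula defining $\bar{\mathrm{P}}_{L,p}$.

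First, I would invoke relation (i) from Subsection~\ref{s2.1}, namely $\gamma^2=\mathrm{id}$. In particular $\gamma$ is a bijection on $\mathcal{H}(L)$, so for any $H\in\mathcal{H}(L)$ the equation $\gamma H'=H$ has the unique solution $H'=\gamma H$. Substituting into the definition (\ref{e-intro1}) of $D\gamma$ gives
\begin{eqnarray*}
D\gamma(\bar{\mathrm{P}}_{L,p})(H)=\sum_{\gamma H'=H}\bar{\mathrm{P}}_{L,p}(H')=\bar{\mathrm{P}}_{L,p}(\gamma H).
\end{eqnarray*}
This reduces the claim to showing $\bar{\mathrm{P}}_{L,p}(\gamma H)=\bar{\mathrm{P}}_{L,1-p}(H)$ for every $H\in\mathcal{H}(L)$.

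Next, I would unpack both sides using Definition~\ref{def3}. On one hand, since the condition $\sigma\in\gamma H$ is equivalent to $\sigma\notin H$ and the condition $\sigma\notin\gamma H$ is equivalent to $\sigma\in H$, one has
\begin{eqnarray*}
\bar{\mathrm{P}}_{L,p}(\gamma H)=\prod_{\sigma\in\gamma H}p(\sigma)\cdot\prod_{\sigma\notin\gamma H}(1-p(\sigma))=\prod_{\sigma\notin H}p(\sigma)\cdot\prod_{\sigma\in H}(1-p(\sigma)).
\end{eqnarray*}
On the other hand, replacing $p$ by $1-p$ in Definition~\ref{def3} yields
\begin{eqnarray*}
\bar{\mathrm{P}}_{L,1-p}(H)=\prod_{\sigma\in H}(1-p(\sigma))\cdot\prod_{\sigma\notin H}p(\sigma),
\end{eqnarray*}
and the two expressions coincide up to a reordering of the factors.

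Since the argument rests entirely on the bijectivity of $\gamma$ and on the symmetric role played by $p(\sigma)$ and $1-p(\sigma)$ in the product formula, there is no real obstacle; the only subtlety worth flagging is ensuring that the sum over preimages of $H$ under $\gamma$ genuinely reduces to a single summand, which is guaranteed by relation (i). The lemma then follows at once.
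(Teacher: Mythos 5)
Your proof is correct, but it takes a different route from the paper's. The paper argues probabilistically: it notes that under $\bar{\text{P}}_{L,p}$ each simplex $\tau\in L$ is included in $H$ independently with probability $p(\tau)$, hence is included in $\gamma H$ independently with probability $1-p(\tau)$, and concludes from the generative description after Definition~\ref{def3} that $\gamma H$ is distributed as $\bar{\text{P}}_{L,1-p}$. You instead work directly with the pushforward formula (\ref{e-intro1}): since $\gamma^2=\mathrm{id}$ makes $\gamma$ a bijection, the sum over preimages collapses to the single term $\bar{\text{P}}_{L,p}(\gamma H)$, and the identity $\bar{\text{P}}_{L,p}(\gamma H)=\bar{\text{P}}_{L,1-p}(H)$ is then a one-line manipulation of the defining product, using that $\sigma\in\gamma H$ iff $\sigma\notin H$. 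Your argument is the more self-contained and rigorous of the two, as it never appeals to the informal ``independent trials'' description; the paper's version has the advantage of being the same template it reuses for the harder Lemmas~\ref{p1} and \ref{p9}, where the preimage sum does not collapse to a single term and a direct computation would be much less tractable. Both are valid proofs of this lemma.
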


 \begin{proof}
Let $H$ be a hypergraph in $L$. Let $\tau\in L$. Then the probability that $\tau$ is a hyperedge in $H$  is $p(\tau)$. Thus the probability that $\sigma$ is a hyperedge in $\gamma H$ is $1-p(\tau)$. By the construction of the random hypergraphs in Definition~\ref{def3}, the probability function of $\gamma H$ is $\bar{{\rm P}}_{L,r,1-p}$.
 \end{proof}

 Let  $p',p''$ be  functions from $L$ to $[0,1]$.

   \begin{lemma}\label{p28}
   The map $D\cap$ sends the pair $(\bar{{\rm P}}_{L,p'},\bar{{\rm P}}_{L,p''})$ to $\bar{{\rm P}}_{L,p'p''}$. And the map $D\cup$ sends the pair $(\bar{{\rm P}}_{L,p'},\bar{{\rm P}}_{L,p''})$ to $\bar{{\rm P}}_{L,1-(1-p')(1-p'')}$.
  \end{lemma}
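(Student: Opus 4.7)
The plan is to verify both assertions by computing, for each simplex $\sigma\in L$, the probability that $\sigma$ lies in the resulting hypergraph, and then to exploit the fact that Definition~\ref{def3} characterizes a distribution in $F(\mathcal{H}(L))$ completely by the independent per-simplex inclusion probabilities. In other words, once I show that the target random hypergraph can be generated by independent Bernoulli trials over the simplices of $L$ with the claimed success probabilities, the identification with $\bar{\text{P}}_{L,p'p''}$ (resp.\ $\bar{\text{P}}_{L,1-(1-p')(1-p'')}$) is automatic by the generation description following Definition~\ref{def3}.

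First I would unpack the definition of $D\cap$ and $D\cup$ using (\ref{e-intro2}): for any $H\in\mathcal{H}(L)$,
\begin{eqnarray*}
D\cap(\bar{\text{P}}_{L,p'},\bar{\text{P}}_{L,p''})(H)&=&\sum_{H_1\cap H_2=H}\bar{\text{P}}_{L,p'}(H_1)\bar{\text{P}}_{L,p''}(H_2),\\
D\cup(\bar{\text{P}}_{L,p'},\bar{\text{P}}_{L,p''})(H)&=&\sum_{H_1\cup H_2=H}\bar{\text{P}}_{L,p'}(H_1)\bar{\text{P}}_{L,p''}(H_2).
\end{eqnarray*}
Think of this as a joint experiment on the product probability space $(H_1,H_2)$, where $H_1$ and $H_2$ are sampled independently according to $\bar{\text{P}}_{L,p'}$ and $\bar{\text{P}}_{L,p''}$. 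Under this joint experiment, the events $\{\sigma\in H_i\}$ are, by Definition~\ref{def3}, mutually independent across all pairs $(\sigma,i)$.

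Next, for each $\sigma\in L$ I compute:
\begin{eqnarray*}
\Pr(\sigma\in H_1\cap H_2)&=&\Pr(\sigma\in H_1)\Pr(\sigma\in H_2)=p'(\sigma)p''(\sigma),\\
\Pr(\sigma\in H_1\cup H_2)&=&1-\Pr(\sigma\notin H_1)\Pr(\sigma\notin H_2)=1-(1-p'(\sigma))(1-p''(\sigma)).
\end{eqnarray*}
Moreover, because the events $\{\sigma\in H_1\}$ and $\{\sigma\in H_2\}$ for distinct simplices $\sigma$ are mutually independent, the events $\{\sigma\in H_1\cap H_2\}$ (resp.\ $\{\sigma\in H_1\cup H_2\}$) indexed by $\sigma\in L$ remain mutually independent, since each is a function only of the pair of coordinates labelled by $\sigma$.

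Finally, I match this to the generation procedure immediately following Definition~\ref{def3}: an independent Bernoulli trial for each simplex $\sigma\in L$ with success probability $q(\sigma)$ produces the distribution $\bar{\text{P}}_{L,q}$. Applying this with $q=p'p''$ and with $q=1-(1-p')(1-p'')$ yields the two claimed identities. There is no real obstacle here; the only thing to be careful about is the mutual independence across simplices, which follows from the product structure of the joint law on $(H_1,H_2)$ and from Definition~\ref{def3}.
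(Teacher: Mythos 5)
Your proposal is correct and follows essentially the same route as the paper: sample $H_1$ and $H_2$ independently, compute the per-simplex inclusion probability of $\sigma$ in $H_1\cap H_2$ (resp.\ $H_1\cup H_2$), and observe that independence across distinct simplices is preserved, so the result matches the generation procedure of Definition~\ref{def3}. Your explicit remark that each event $\{\sigma\in H_1\cap H_2\}$ depends only on the two coordinates labelled by $\sigma$ is a slightly more careful justification of the independence claim that the paper states in one line, but the argument is the same.
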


  \begin{proof}
We choose  hypergraphs $H',H''\in \mathcal{H} (L)$  independently at random    with probability functions $\bar{{\rm P}}_{L,p'}$   and $\bar{{\rm P}}_{L,p''}$  respectively.  In order to prove Lemma~\ref{p28}, we need to show
\begin{enumerate}[(a).]
\item
the random hypergraph $ H'\cap H''$   satisfies Definition~\ref{def3}   with probability function $\bar{{\rm P}}_{L,p'p''}$;
\item
the random hypergraph $ H'\cup H''$   satisfies Definition~\ref{def3}   with probability function $\bar{{\rm P}}_{L,1-(1-p')(1-p'')}$.
\end{enumerate}
Let $\sigma\in L$.  Consider two independent trials: (1). generate $H'$; (2). generate $H''$.

{\sc Proof of} (a). $\sigma\in H'\cap H''$ if and only if $\sigma\in H'$ in trial (1) and $\sigma\in H''$ in trial (2).  Thus $\sigma\in H'\cap H''$ has probability $pp'$. Letting $\sigma$ run over $L$, these trials of  $\sigma$'s are independent.

{\sc Proof of} (b).  $\sigma\notin H'\cup H''$ if and only if $\sigma\notin H'$ in trial (1) and $\sigma\notin H''$ in trial (2).  Thus $\sigma\notin H'\cup H''$ has probability $(1-p')(1-p'')$, and $\sigma\in H'\cap H''$ has probability $1-(1-p')(1-p'')$. Letting $\sigma$ run over $L$, these trials of  $\sigma$'s are independent.
\end{proof}

    Now we prove Theorem~\ref{th888}.

    \begin{proof}[Proof of Theorem~\ref{th888}]
    Theorem~\ref{th888}~(a) follows from Lemma~\ref{p8}.     Theorem~\ref{th888}~(b) follows from Lemma~\ref{p1}.     Theorem~\ref{th888}~(c) follows from Lemma~\ref{p9}.        Theorem~\ref{th888}~(d) follows from the first assertion of Lemma~\ref{p28}. Theorem~\ref{th888}~(e) follows from the second assertion of Lemma~\ref{p28}.
    \end{proof}

    \section*{Acknowledgments}
{\small
The  authors would like to express their  deepest
gratitude to the editor and the referee for their careful reading and helpful suggestion.  The project was supported in part by the Singapore Ministry of Education research grant
(AcRF Tier 1 WBS No. R-146-000-222-112).  Shiquan Ren was supported in part by
the National Research Foundation, Prime Minister's Office, Singapore (CREATE programme), Natural Science Foundation of China (NSFC grant no. 12001310), and China Postdoctoral Science Foundation.  Chengyuan  Wu was supported in part by the President's Graduate Fellowship of National University
of Singapore.  Jie Wu was supported by Natural Science Foundation of China (NSFC grant no. 11971144), High-level Scientific Research Foundation of Hebei Province and the start-up research fund from BIMSA. }

\bigskip

Shiquan Ren

 Address:  School of Mathematics and Statistics, Henan University,  Kaifeng 475004,  China.
  e-mail:  renshiquan@henu.edu.cn

\bigskip

Chengyuan Wu

Address: Department of Mathematics, National University of Singapore,  119076, Singapore.
  e-mail: wuchengyuan@u.nus.edu
  
\bigskip

Jie  Wu

Address: Yanqi Lake Beijing Institute of Mathematical Sciences and Applications, Beijing 101408, China.
  e-mail: wujie@bimsa.cn


\begin{thebibliography}{99}

\bibitem{yd-4}
L. Aronshtam and N. Linial, \emph{The threshold for $d$-collapsibility in random complexes}, {Random Struct. Algor.} {\bf 48} (2016), 260-269.


\bibitem{8}
L. Aronshtam, N. Linial, T. Luczak and R. Meshulam, \emph{Collapsibility and vanishing of top homology in random simplicial complexes. }
{ Discrete Comput. Geom. }{\bf{49(2)}} (2013), 317-334.


\bibitem{6}
E. Babson, C. Hoffman and M. Kahle, \emph {The fundamental group of random $2$-complexes. } {J. Amer. Math. Soc.} {\bf 24(1)} (2010), 1-28.


\bibitem{Berge}
C. Berge,  \emph{Graphs and hypergraphs. } American Elsevier Pub. Co. North-Holland, New York, 1976.

\bibitem{y2-4}
D. Cohen, A. Costa, M. Farber and T. Kappeler, \emph{Topology of random $2$-complexes}. {Discrete Comput. Geom. } {\bf 47} (2012), 117-149.

\bibitem{y2-5}
D. Cohen, A. Costa, M. Farber and T. Kappeler, \emph{Correction to Topology of random $2$-complexes}. {Discrete Comput. Geom. } {\bf 56} (2016), 502-503.


\bibitem{y2-1}
A.E. Costa and M. Farber, \emph{The asphericity of   random $2$-dimensional complexes}, {Random Struct. Algor.} {\bf 46} (2015), 261-273.

\bibitem{y2-2}
A.E. Costa and M. Farber, \emph{Geometry and topology of random $2$-complexes}, {Israel J. Math.} {\bf 209} (2015), 883-927.

\bibitem{cfh}
A. Costa, M. Farber and D. Horak, \emph{Fundamental groups of clique complexes of random graphs}, Trans. London Math. Soc. {\bf 2(1)} (2015), 1-32.

\bibitem{m1}
A. Costa and M. Farber, \emph{Large random simplicial complexes, I}, J. Topol. Anal. {\bf 8(3)} (2016), 399-429.

\bibitem{m2}
A. Costa and M. Farber, \emph{Large random simplicial complexes, II; the fundamental group}, J. Topol. Anal. {\bf 9(3)} (2017), 441-483.


\bibitem{m3}
A. Costa and M. Farber, \emph{Large random simplicial complexes, III; the critical dimension}, J. Knot Theory Ramifications {\bf 26(2)} (2017), 1740010-1 - 1740010-26.

\bibitem{m4}
A. Costa and M. Farber, \emph{Random simplicial complexes}, Configuration Spaces  129-153, Springer INdAM Series {\bf  14} (2016), Springer, 129-153.




\bibitem{1959er}
P. Erd\"os and A. R\'enyi, \emph{On random graphs I}, {Publ. Math. Debrecen} {\bf 6} (1959), 290-297.

\bibitem{1960er}
P. Erd\"os and A. R\'enyi, \emph{On the evolution of random graphs}, {Publ. Math. Inst. Hungar. Acad. Sci.} {\bf 5} (1960), 17-61.










\bibitem{1959g}
E.N. Gilbert, \emph{Random graphs}, {Ann. Math. Statist. } {\bf 30(4)} (1959), 1141-1144.




\bibitem{yd-2}
A. Gundert, \emph{On eigenvalues of random complexes}, {Israel J. Math.} {\bf 216} (2016), 545-582.

\bibitem{yd-3}
A. Gundert and U. Wagner, \emph{On topological minors in random simplicial complexes}, {Proc. Amer. Math. Soc.} {\bf 144} (2016), 1815-1828.


\bibitem{hatcher}
{A. Hatcher,} \emph{Algebraic Topology.} Cambridge University Press, Cambridge, 2002.



\bibitem{9}
C. Hoffman, M. Kahle and E. Paquette, \emph{The threshold for integer homology in random $d$-complexes}, {Discrete Comput. Geom.} {\bf 57} (2017), 810-823.


\bibitem{clique}
M. Kahle, \emph{Topology of random clique complexes}, Discrete Math. {\bf 309(6)} (2009), 1658-1671.

\bibitem{contem}
M. Kahle, \emph{Topology of random simplicial complexes: a survey}, Algebraic Topology: Applications and New Directions, Contem. Math. {\bf 620} (2014), 201-221.

\bibitem{yd-1}
M. Kahle and B. Pittle, \emph{Inside the critical window for cohomology of random $k$-complexes}, {Random Struct. Algor.} {\bf 48} (2016), 102-104.




\bibitem{7}
D. N. Kozlov, \emph {The threshold function for vanishing of the top homology group of random $d$-complexes. } {Proc. Amer. Math. Soc. } {\bf 138(12)} (2010), 4517-4527.


\bibitem{y2-3}
N. Linial and R. Meshulam, \emph{Homological connectivity of random $2$-complexes}, {Combinatorica} {\bf 26} (2006), 475-487.

\bibitem{annals}
N. Linial and Y. Peled, \emph{On the phase transition in random simplicial complexes}, {Ann. Math.} {\bf 184} (2016), 745-773.






\bibitem{yd-5}
R. Meshulam and N. Wallach, \emph{Homological connectivity of random $k$-dimensional complexes}, {Random Struct. Algor.} {\bf 34} (2009), 408-417.



\bibitem{epide}
R. Pastor-Satorras,
C. Castellano, P.V. Mieghem, and  A. Vespignani, \emph{Epidemic processes in complex networks}, Rev. Mod. Phys. {\bf 87 } (2015), 925-979.

\end{thebibliography}
\end{document}